\newcommand{\excise}[1]{}%{$\star$\textsc{#1}$\star$}
\newtheorem{thm}{Theorem}[section]
\newtheorem{lemma}[thm]{Lemma}
\newtheorem{cor}[thm]{Corollary}
\newtheorem{prop}[thm]{Proposition}
\newtheorem{question}[thm]{Question}
\newtheorem*{mainthm}{Structure Theorem for Sets of Lengths}
\theoremstyle{definition}
\newtheorem{example}[thm]{Example}
\newtheorem{remark}[thm]{Remark}
\newtheorem{defn}[thm]{Definition}
\newtheorem{notation}[thm]{Notation}
\numberwithin{equation}{section}
\renewcommand\>{\rangle}
\newcommand\<{\langle}
\newcommand\ZZ{\mathbb{Z}}
\DeclareMathOperator\lcm{lcm} % lcm
\newcommand{\Frob}{\mathsf{Frob}}
\newcommand{\Ap}{\mathsf{Ap}}
\newcommand{\MinS}{\mathsf{m}}
\newcommand{\M}{\mathsf{M}}
\newcommand{\LengthSet}{\mathsf{L}}
\definecolor{lightred}{rgb}{1.0, 0.7, 0.7}
\begin{document}

\mbox{}
%\vspace{-2ex}%-1.1743pt}
\title[The structure theorem for sets of length for numerical semigroups]{The structure theorem for sets of length \\ for numerical semigroups}

\author[Moskowitz]{Gilad Moskowitz}
\address{Mathematics Department\\San Diego State University\\San Diego, CA 92182}
\email{gilad.moskowitz@gmail.com}

\author[O'Neill]{Christopher O'Neill}
\address{Mathematics Department\\San Diego State University\\San Diego, CA 92182}
\email{cdoneill@sdsu.edu}

\begin{abstract}
For sufficiently nice families of semigroups and monoids, the structure theorem for sets of length states that the length set of any sufficiently large element is an arithmetic sequence with some values omitted near the ends.  In this paper, we prove a specialized version of the structure theorem that holds for any numerical semigroup $S$.  Our description utilizes two other numerical semigroups $S_{\mathsf M}$ and $S_{\mathsf m}$, derived from the generators of $S$:\ for sufficiently large $n \in S$, the Ap\'ery sets of $S_{\mathsf M}$ and $S_{\mathsf m}$ specify precisely which lengths appear in the length set of $n$, and their gaps specify which lengths are ``missing''.  We also provide an explicit bound on which elements satisfy the structure theorem.  
\end{abstract}

\maketitle

%%%%%%%%%%%%%%%%%%%%%%%%%%%%%%%%%%%%%%%%%%%%%%%%%%%%%%%%%%%%%%%%%%%%%%%%%
\section{Introduction}%%%%%%%%%%%%%%%%%%%%%%%%%%%%%%%%%%%%%%%%%%%%%%%%%%%
\label{sec:intro}%%%%%%%%%%%%%%%%%%%%%%%%%%%%%%%%%%%%%%%%%%%%%%%%%%%%%%%%
%raggedbottom%%%%%%%%%%%%%%%%%%%%%%%%%%%%%%%%%%%%%%%%%%%%%%%%%%%%%%%%%%%%

Throughout this document, we let $S$ denote a \emph{numerical semigroup}  (that is, an additively closed subset of $\ZZ_{\ge 0}$), and denote by $n_1, \ldots, n_k$ a generating set of $S$, i.e.,
$$S = \<n_1, \ldots, n_k\> = \{q_1n_1 + q_2n_2 + \cdots + q_kn_k \mid q_1, \ldots, q_k \in \ZZ_{\ge 0}\}.$$
It is known that a numerical semigroup $S$ is cofinite in $\ZZ_{\ge 0}$ if and only if \(\gcd(S) = 1\), and it is common practice to assume this holds.  It is also common practice to assume $n_1, \ldots, n_k$ comprise the unique minimal generating set of $S$.  
However, in this paper, we do \textbf{not} make either of these assumptions.  

A \emph{factorization} of $n \in S$ is an expression 
\[
n = q_1n_1 + \cdots + q_kn_k
\]
of $n$ as a sum of generators of $S$, and the \emph{length} of a factorization is the sum $q_1 + \cdots + q_k$.  
% The \emph{set of factorizations} of $n$ is the set
% \[
% \mathsf Z_S(n) = \{q \in \ZZ_{\ge 0}^k : n = q_1n_1 + \cdots + q_kn_k\}
% \]
% viewed as a subset of $\ZZ_{\ge 0}^k$, and 
The \emph{length set} of $n$ is the set 
$$\mathsf L_S(n) = \{q_1 + \cdots + q_k : q_1, \ldots, q_k \in \ZZ_{\ge 0} \text{ with } n = q_1n_1 + \cdots + q_kn_k\}$$
of all possible factorization lengths of $n$.  Define
% The \emph{maximum} and \emph{minimum} factorization length functions are defined as 
$$\mathsf M_S(n) = \max \mathsf L_S(n) \qquad \text{ and } \qquad \mathsf m_S(n) = \min \mathsf L_S(n).$$
When there can be no confusion, we often omit the subscripts and simply write $\mathsf L(n)$, $\mathsf M(n)$, and $\mathsf m(n)$, respectively.  

% For a thorough introduction to numerical semigroups, see~\cite{numerical}.  

The structure theorem for sets of length~\cite{geroldingerlengthsets}, a cornerstone of factorization theory, states that for any sufficiently large semigroup element \(n\), the length set of \(n\) will be an almost arithmetical progression (that is, an arithmetic sequence 
% starting with \(\MinS(n)\), ending with \(\M(n)\), with a common difference \(d\), and 
with a few elements missing towards the beginning and end of the sequence).  The scope of the structure theorem goes well beyond that of numerical semigroups; it is known to hold for a broad family of semigroups and monoids, including finitely presented monoids, large families of Krull monoids, and others; see the monograph~\cite{nonuniq} for a thorough overview.  In fact, one of the central themes in factorization theory is determining for which families of semigroups the structure theorem holds; see~\cite{setsoflengthmonthly} for a detailed account.  

We now state the structure theorem in the current context of numerical semigroups.  

\begin{mainthm}
There exist integers \(t, t'\) and \(d\) such that for sufficiently large \(n \in S\), there exist \(A \subseteq [1, t]\) and \(A' \subseteq [1, t']\) with the property that
\[\LengthSet(n) = \{\MinS(n), \MinS(n) + d, \dots, \M(n) - d, \M(n)\} \setminus \big( (dA' + \MinS(n)) \cup (-dA + \M(n)) \big).\]  
\end{mainthm}

In recent years, there has been an effort to specialize the structure theorem for semigroups of sufficiently high interest, stemming in part from its connections to some of the biggest open problems in additive combinatorics~\cite{factorizationadditivebridge,lensetprogress}.  These specializations generally concern which length sets are possible~\cite{setoflengthsets,lensetrealiz,realizsystemlengthskrull,realizthm}, while others focus on refinements of the structure theorem, such as the unions of all sets of length~\cite{additivefamiliesfactorization}, or a description of the ``missing lengths'', both locally for elements~\cite{subdeltas,setdistanceskrull} and globally for the semigroup as a whole~\cite{deltarealizationnumerical,deltarsetealization}.  

The main result of the present paper is Theorem~\ref{t:structurethm}, a refined structure theorem for sets of length for numerical semigroups, wherein we characterize the values $d, t$ and $t'$ in the theorem, and identify bijections between the sets $A \subseteq [1,t]$ and $A' \subseteq [1,t']$ of missing factorization lengths and sets of gaps in the semigroups 
\[
S_{\M} =  \langle n_2 - n_1, n_3 - n_1, \dots, n_k - n_1 \rangle
\quad \text{and} \quad
S_{\MinS} =  \langle n_k - n_1, n_k - n_2, \dots, n_k - n_{k-1} \rangle
\]
respectively.  This is best illustrated with an example.  

\begin{example}\label{e:mainex}
Let $S = \langle 5,9,12 \rangle$.  Figure~\ref{f:maxlenset} depicts the ``top'' of the length sets $\mathsf L(100)$, $\ldots$, $\mathsf L(104)$, with filled black boxes indicating the ``missing'' lengths (the ``$A$'' sets in the structure theorem).  Figure~\ref{f:maxlenapery} depicts the elements of the semigroup 
$$S_{\M} = \langle 9 - 5, 12 - 5 \rangle = \langle 4, 7 \rangle$$
with filled black boxes indicating the gap set $\ZZ_{\ge 0} \setminus S_{\M}$.  Notice the identical positioning of the filled black boxes in each depiction; this relationship is the heart of Theorem~\ref{t:structurethm}.  Figures~\ref{f:minlenapery} and~\ref{f:minlenset} depicts a similar phenomenon (after a reflection) for the sets $A'$ in the structure theorem for the numerical semigroup $S' = \<4,6,9\>$.  
\end{example}

\begin{figure}[t]
\begin{subfigure}{.4\textwidth}
\begin{center}
\centering
\begin{tabular}{ | @{\,} c @{\,} | @{\,} c @{\,} | @{\,} c @{\,} | @{\,} c @{\,} | @{\,} c @{\,} |}
  \hline
  \textbf{\phantom{1}0\phantom{0}} & \textbf{\phantom{1}1\phantom{0}} & \textbf{\phantom{1}2\phantom{0}} & \textbf{\phantom{1}3\phantom{0}} & \textbf{\phantom{1}4\phantom{0}}\\
  \hline
  \hline
     0 & \cellcolor{black} & \cellcolor{black} & \cellcolor{black} & 4 \\
    \hline
    \cellcolor{black} & \cellcolor{black} & 7 & 8 & \cellcolor{black} \\
    \hline
    \cellcolor{black} & 11 & \underline{12} & \cellcolor{black} & \underline{14} \\
    \hline
    \underline{15} & \underline{16} & \cellcolor{black} & \underline{18} & 19 \\
    \hline
    20 & 21 & 22 & 23 & 24 \\
  \hline
\end{tabular}
\end{center}
\caption{Elements of \(S_{\M} = \langle 4, 7 \rangle\) below 25, arranged by equivalence class mod 5.}
\label{f:maxlenapery}
\end{subfigure}%
\qquad\qquad 
\begin{subfigure}{.2\textwidth}
\end{subfigure}%
\begin{subfigure}{.4\textwidth}
\begin{center}
\centering
\begin{tabular}{ | @{\,} c @{\,} | @{\,} c @{\,} | @{\,} c @{\,} | @{\,} c @{\,} | @{\,} c @{\,} |}
  \hline
  \textbf{100} & \textbf{101} & \textbf{102} & \textbf{103} & \textbf{104}\\
  \hline
  \hline
    20 & \cellcolor{black} & \cellcolor{black} & \cellcolor{black} & 20 \\
    \hline
    \cellcolor{black} & \cellcolor{black} & 19 & 19 & \cellcolor{black} \\
    \hline
    \cellcolor{black} & 18 & 18 & \cellcolor{black} & 18 \\
    \hline
    17 & 17 & \cellcolor{black} & 17 & 17 \\
    \hline
    16 & 16 & 16 & 16 & 16 \\
  \hline
\end{tabular}
\end{center}
\caption{Tops of the length sets of the elements \(100,\ldots,104 \in S = \langle 5, 9, 12 \rangle\).}
\label{f:maxlenset}
\end{subfigure}

\vspace{1em}

\begin{subfigure}{.4\textwidth}
\begin{center}
\centering
\begin{tabular}{ | @{\,\,} c @{\,\,} | @{\,} c @{\,} | @{\,} c @{\,} | @{\,} c @{\,} | @{\,} c @{\,} | @{\,} c @{\,} | @{\,} c @{\,} | @{\,} c @{\,} | @{\,} c @{\,} | }
  \hline
  \textbf{0} & \textbf{1} & \textbf{2} & \textbf{3} & \textbf{4} & \textbf{5} & \textbf{6} & \textbf{7} & \textbf{8}\\
  \hline
  \hline
    0 & \cellcolor{black} & \cellcolor{black} & 3 & \cellcolor{black} & 5 & 6 & \cellcolor{black} & 8\\
    \hline
    9 & 10 & 11 & 12 & 13 & 14 & 15 & 16 & 17\\
    \hline
\end{tabular}
\end{center}
\caption{Elements of \(S_{\MinS}' = \langle 3, 5 \rangle\) below 18, arranged by equivalence class mod 9.}
\label{f:minlenapery}
\end{subfigure}%
\qquad\qquad 
\begin{subfigure}{.2\textwidth}
\end{subfigure}%
\begin{subfigure}{.4\textwidth}
\begin{center}
\centering
\begin{tabular}{ | @{\,} c @{\,} | @{\,} c @{\,} | @{\,} c @{\,} | @{\,} c @{\,} | @{\,} c @{\,} | @{\,} c @{\,} | @{\,} c @{\,} | @{\,} c @{\,} | @{\,} c @{\,} |}
  \hline
  \textbf{91} & \textbf{92} & \textbf{93} & \textbf{94} & \textbf{95} & \textbf{96} & \textbf{97} & \textbf{98} & \textbf{99}\\
  \hline
  \hline
    11 & \cellcolor{black} & 11 & 11 & \cellcolor{black} & 11 & \cellcolor{black} & \cellcolor{black} & 11\\
    \hline
    12 & 12 & 12 & 12 & 12 & 12 & 12 & 12 & 12\\
    \hline
\end{tabular}
\end{center}
\caption{Bottoms of the length sets of the elements \(91,\ldots,99 \in S' = \langle 4, 6, 9 \rangle\).}
\label{f:minlenset}
\end{subfigure}
\caption{}
\label{f:maxminlentables}
\end{figure}

Our result comes as part of a recently flurry of papers examining the factorization properties of large numerical semigroup elements, many of which turn out to be eventually periodic or quasipolynomial~\cite{elastsets,delta,catenaryperiodic,omegaquasi}; see the survey~\cite{numericalsurvey} for details and~\cite{compasympomega,compasympdelta} for computational applications.  
The primary strength of our result is that it characterizes the missing lengths in terms of gap sets~\cite{gapsnonsym,fundamentalgaps,diophantinefrob}, which have been a central focus in the study of numerical semigroups since their inception~\cite{frobsylvester}.  

The paper is organized as follows.  After introducing a generalization of the Ap\'ery set in Section~\ref{sec:generalapery}, we prove in Section~\ref{sec:maxminlen} that for sufficiently large $n$, the set $A$ from the structure theorem is identical for $\mathsf L(n)$ and $\mathsf L(n + n_1)$, and the set $A'$ is identical for $\mathsf L(n)$ and $\mathsf L(n + n_k)$ (Theorems~\ref{t:jmaxlen} and~\ref{t:jminlen}, respectively).  In Section~\ref{sec:structurethm}, we prove Theorem~\ref{t:structurethm}, characterizing the sets $A$ and $A'$ in terms of the gaps of the semigroups $S_{\M}$ and $S_{\MinS}$, respectively, as well as obtain an explicit bound on the $n \in S$ for which the structure theorem holds (Theorem~\ref{t:bound}).   We also draw conclusions about realization questions akin to those considered in~\cite{subdeltas,deltarealizationnumerical,deltarsetealization,setdistanceskrull} for other families of semigroups and monoids; see the discsussion in Remark~\ref{r:classification}.

%%%%%%%%%%%%%%%%%%%%%%%%%%%%%%%%%%%%%%%%%%%%%%%%%%%%%%%%%%%%%%%%%%%%%%%%%
\section{A generalization of the Ap\'ery set}%%%%%%%%%%%%%%%%%%%%%%%%%%%%
\label{sec:generalapery}%%%%%%%%%%%%%%%%%%%%%%%%%%%%%%%%%%%%%%%%%%%%%%%%%
%raggedbottom%%%%%%%%%%%%%%%%%%%%%%%%%%%%%%%%%%%%%%%%%%%%%%%%%%%%%%%%%%%%

The Ap\'ery set of a numerical semigroup $T$ is central to both theoretical~\cite{aperyhilbert} and computational~\cite{compapery} aspects of numerical semigroups; see~\cite{numericalappl} for a thorough overview.  Usually defined with respect to an element $n \in T$, the Ap\'ery set 
\[
\Ap(T;n) = \{m \in T : m - n \notin T\}
\]
can be shown to consist of the first element of $T$ in each equivalence class modulo $n$.  
In this section, we define a generalization of the Ap\'ery set that allows $n \in \ZZ_{\ge 0}$ and $\gcd(T) > 1$.  Other generalizations of the Ap\'ery set have been studied, and while some are similar to our definition~\cite{valuesetplanebranches}, most allow $\Ap(T;n)$ to contain more than one element of each equivalence class modulo $n$ if $n \notin T$~\cite{heirarchycodesns,telescopiccodes,goppalikefengrao,hwintersection}.  Moreover, none that the authors were able to find allowed $\gcd(T) > 1$.  
After verifying some basic properties of $\Ap(T;n)$, we introduce a collection of sets that partition $T$, with $\Ap(T;n)$ as its foundation, that will play a key role in subsequent sections.  

\begin{notation}\label{n:generalapery}
Throughout this section, let $S \subseteq \ZZ_{\ge 0}$ denote a cofinite numerical semigroup, let \(d \in \ZZ_{\ge 1}\), and $T = dS \subseteq \ZZ_{\ge 0}$.
\end{notation}

\begin{defn}\label{d:aperyset}
Fix \(n \in \ZZ_{\ge 1}\). For each $i \in \{0, 1, \ldots, n-1\}$, let
\[
a_i = \begin{cases}
0 & \text{if \(T \cap \{i, i+n, i+2n,\ldots\} = \emptyset\);} \\ 
\min(T \cap \{i, i+n, i+2n,\ldots\}) & \text{otherwise.} \end{cases}
\]
The \emph{Ap\'ery set} of \(T\) with respect to \(n\) as
\[\Ap(T; n) = \{a_i \mid i = 0, 1, \ldots, n-1\}.\]
Note that if \(T\) has finite complement and \(n \in T\), then \(\Ap(T; n)\) coincides with the usual definition of the Ap\'ery set~\cite{numericalappl}.  
\end{defn}

We briefly verify that under mild hypotheses, $\Ap(T;n)$ has some familiar properties.  

\begin{prop}\label{p:aperyset}
For any \(n \in \ZZ_{\ge 1}\), the elements of \(\Ap(T; n)\) are distinct modulo \(n\).  Moreover, if \(\gcd(d, n) = 1\), then \(|\Ap(T; n)| = n\) and \(\Ap(T; dn) = \Ap(T; n)\).  
\end{prop}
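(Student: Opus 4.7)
The plan is to prove the three assertions in turn by directly unwinding Definition~\ref{d:aperyset} together with the fact that $T = dS$. The key observation throughout is that whenever $a_i \ne 0$, we have $a_i \equiv i \pmod n$ by construction, while $a_i = 0$ only when $T \cap (i + n\ZZ_{\ge 0}) = \emptyset$; moreover $0 \in T$ always, so $a_0 = 0$ and $0 \in \Ap(T;n)$ unconditionally. To prove the distinctness statement, I would take two distinct elements of $\Ap(T;n)$ and show by a short case split that they are incongruent modulo $n$: if both are nonzero, say $a_i \ne a_j$, then $a_i \equiv i$ and $a_j \equiv j$ modulo $n$ with $i, j \in \{0, \ldots, n-1\}$ force $i \ne j$ and hence incongruence; if one of the two is $0$ and the other is some nonzero $a_j$, then $j \ne 0$ (else $0$ itself would witness non-emptiness of class $0$ and give $a_j = 0$), and again $a_j \equiv j \not\equiv 0 \pmod n$.

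For $|\Ap(T;n)| = n$ when $\gcd(d,n) = 1$, I would show that every residue class modulo $n$ is represented by a nonzero element of $\Ap(T;n)$. Cofiniteness of $S$ gives that $S$ contains all sufficiently large nonnegative integers, so $T$ contains every sufficiently large multiple of $d$. Since $\gcd(d,n) = 1$, the map $m \mapsto dm$ permutes $\ZZ/n\ZZ$, so for each $i \in \{0, \ldots, n-1\}$ I can choose $m$ arbitrarily large with $dm \equiv i \pmod n$; the resulting $dm \in T$ witnesses $a_i \ne 0$. Combined with the distinctness just established, this yields $|\Ap(T;n)| = n$.

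For the equality $\Ap(T;dn) = \Ap(T;n)$, I would exploit that every element of $T$ is divisible by $d$, so among the $dn$ residue classes modulo $dn$ only the $n$ classes of the form $rd \pmod{dn}$ with $r \in \{0, \ldots, n-1\}$ can meet $T$, and the minimum of $T$ in each such class is $d \cdot \min(S \cap (r + n\ZZ_{\ge 0}))$. On the other side, writing $r(i) := d^{-1}i \bmod n$, the minimum of $T$ in the class $i \pmod n$ is $d \cdot \min(S \cap (r(i) + n\ZZ_{\ge 0}))$ when $\gcd(d,n) = 1$. Because $d^{-1}$ permutes $\{0, \ldots, n-1\}$, the two indexings produce the same collection of minima, so the two Ap\'ery sets coincide as sets. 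The only mild subtlety to watch out for throughout is the behavior at $0$: empty residue classes all collapse to the single value $0 \in \Ap(T;n)$, so at each stage it is important to argue in terms of distinct set elements rather than counting with multiplicity.
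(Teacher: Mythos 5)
Your proof is correct. The distinctness claim and $|\Ap(T;n)| = n$ are argued essentially as in the paper (the paper fixes a single $y = nz + d \in T$ with $y \equiv d \pmod n$ and takes its multiples, while you pick a separate large multiple of $d$ for each residue class --- same idea, different packaging). Where you genuinely diverge is in proving $\Ap(T;dn) = \Ap(T;n)$: you compute both sets outright, observing that $T$ consisting of multiples of $d$ restricts the nontrivial classes mod $dn$ to those of the form $rd$, each with minimum $d\cdot\min\bigl(S \cap (r + n\ZZ_{\ge 0})\bigr)$, and that the bijection $i \mapsto d^{-1}i$ on residues mod $n$ (valid since $\gcd(d,n)=1$) matches the two indexings. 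The paper instead establishes the one-sided inclusion $\Ap(T;n) \subseteq \Ap(T;dn)$ --- a minimum within a residue class mod $n$ is automatically a minimum within the finer class mod $dn$ --- and combines this with $|\Ap(T;dn)| \le n$ (again using $d \mid a$) and the just-proved $|\Ap(T;n)| = n$ to force equality. The paper's route is a touch slicker since it sidesteps the explicit inverse $d^{-1}$; yours is more transparent and has the pleasant byproduct of identifying $\Ap(T;n)$ with $d\cdot\Ap(S;n)$ up to reindexing.
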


\begin{proof}
The first claim follows from the definition of $\Ap(T; n)$ since $a_0 = 0$ and each nonzero $a_i$ satisfies $a_i \equiv i \bmod n$.  Next, fixing $z \in \ZZ$ so that $z + d \ZZ_{\ge 0} \subseteq T$, we see \(y = nz + d\) satisfies $y \in T$ and $y \equiv d \bmod n$.  
This means if \(\gcd(d, n) = 1\), then taking integer multiples of $y$ reaches each equivalence class modulo $n$, so $\Ap(T; n)$ contains an element from each equivalence class modulo $n$, and thus \(|\Ap(T; n)| = n\).  
For the final claim, it suffices to observe \(\Ap(T; n) \subseteq \Ap(T; dn)\) (since $a - dn \equiv a - n \bmod n$ for each $a \in \Ap(T;n)$) and $|\Ap(T; dn)| \le n$ (since each $a \in \Ap(T; dn)$ satisfies $d \mid a$).  
\end{proof}

% \begin{remark}
% Note that the primary difference between Proposition~\ref{p:aperyset} \emph{a} and \emph{b} is that in \emph{a} we are looking at the Ap\'ery set of \(T\) with respect to an integer that is relatively prime to \(\gcd(T)\), and in \emph{b} we look at the Ap\'ery set with respect to an integer that is a multiple of \(\gcd(T)\).
% \end{remark}

\begin{defn}\label{d:japeryset}
Fix \(n \in \ZZ_{\ge 1}\) and \(j \geq 1\).  The \emph{j-th Ap\'ery set} of \(T\) with respect to \(n\) is the set $\Ap_j(T; n)$ consisting of the $j$-th element of $\{a, a + n, \ldots\} \cap T$  for each $a \in \Ap(T; n)$.  In particular, 
\[
\Ap_j(T; n) = \{a + kn \in T : a \in \Ap(T; n) \text{ and } |\{a, a + n, \ldots, a + kn\} \cap T| = j\},
\]
where for each $a \in \Ap(T; n)$, there is a unique $k \in \ZZ_{\ge 0}$ such that $a + kn \in \Ap_j(T; n)$.  
\end{defn}

\begin{example}\label{e:aperyset}
Let \(T = \langle 4, 7 \rangle\), whose first few elements are
\[T = \{0, 4, 7, 8, 11, 12, 14, 15, 16, 18, 19, 20, \ldots\}.\]
Under Definition~\ref{d:aperyset}, we have
\[\Ap(T; 5) = \{0, 11, 7, 8, 4\},\]
and under Definition~\ref{d:japeryset}, we have $\Ap_1(T; 5) = \Ap(T; 5)$,
\[
\Ap_2(T; 5) = \{15, 16, 12, 18, 14\},
\quad \text{and} \quad
\Ap_3(T; 5) = \{20, 21, 22, 23, 19\}.
\]
These comprise the first, second, and third integers in each column in Figure~\ref{f:maxlenapery}, respectively.  
For each \(j \geq 3\), we see that \(\Ap_{j+1}(T; 5) = \Ap_{j}(T; 5) + 5\) since \(\ZZ_{\ge 19} \subseteq T\) and \(\min \Ap_3(T; 5) = 19\).
\end{example}

\begin{lemma}\label{l:japeryset}
% For all $n \in \ZZ_{\ge 1}$ and $j \in \ZZ_{\ge 1}$, 
% Each nonzero element in \(\Ap_j(T; n)\) is the \(j\)-th element in its equivalence class modulo \(n\) that lies in \(T\).  In particular, 
For each $n \in \ZZ_{\ge 1}$, \(\Ap_1(T; n) = \Ap(T; n)\), and
\[
T = \bigcup_{j \ge 1} \Ap_j(T; n),
\]
where the right hand side in the above equality is a disjoint union.  
If \(n \in T\), then 
\[\Ap_j(T; n) = \Ap(T; n) + (j-1)n = \{a + (j-1)n : a \in \Ap(T; n) \}.\]
\end{lemma}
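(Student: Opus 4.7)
The plan is to peel off the three claims one at a time, since each is essentially bookkeeping on top of Definitions~\ref{d:aperyset} and~\ref{d:japeryset}. For the first claim, I would note that every $a \in \Ap(T;n)$ lies in $T$: either $a = 0 \in T$, or $a$ is defined as the minimum of a nonempty intersection $T \cap \{i, i+n, \ldots\}$ and so $a \in T$ by construction. Hence $a$ itself is the first element of $\{a, a+n, a+2n, \ldots\} \cap T$, which immediately gives $\Ap_1(T;n) = \Ap(T;n)$.

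For the disjoint-union claim, I would start with a $t \in T$ and let $i \in \{0, 1, \ldots, n-1\}$ be its residue mod $n$. Then $t$ witnesses that $T \cap \{i, i+n, \ldots\}$ is nonempty, so $a_i$ is the genuine minimum of this set and $a_i \le t$. Writing $t = a_i + kn$ for the unique $k \ge 0$ and setting $j = |\{a_i, a_i + n, \ldots, a_i + kn\} \cap T|$, the definition of $\Ap_j(T;n)$ puts $t$ into it. For disjointness, I would observe that if $t = a + kn \in \Ap_j(T;n)$ for some $a \in \Ap(T;n)$, then the residue of $a$ mod $n$ equals that of $t$; by Proposition~\ref{p:aperyset} there is at most one such $a$, so $k$ and hence $j$ are determined by $t$ alone. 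The one subtlety to note explicitly is that when column $i$ is empty, $a_i = 0$ collapses into the element $0 \in \Ap(T;n)$ and contributes nothing to any residue class other than $0$, which is harmless since no $t \in T$ lives in an empty column.

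For the third claim, I would use that $T = dS$ is closed under addition. Assuming $n \in T$, for any $a \in \Ap(T;n)$ we have $a \in T$ (as above), so $a + kn \in T$ for every $k \ge 0$. Thus $\{a, a+n, a+2n, \ldots\} \cap T = \{a + kn : k \ge 0\}$, and its $j$-th smallest element is exactly $a + (j-1)n$. Taking the union over $a \in \Ap(T;n)$ yields $\Ap_j(T;n) = \Ap(T;n) + (j-1)n$.

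None of this presents a deep obstacle; the only care required is keeping straight the distinction between the indexed family $\{a_i\}_{i=0}^{n-1}$ (where several $a_i$ may equal $0$ if their columns are empty) and the set $\Ap(T;n)$ (in which all such zeros coalesce), and making sure the argument iterates over $\Ap(T;n)$ as a set so that no element of $T$ is counted twice. I expect this careful handling of the empty-column case to be the main point to articulate cleanly in writing.
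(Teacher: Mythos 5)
Your proof is correct and amounts to the same careful unwinding of Definitions~\ref{d:aperyset} and~\ref{d:japeryset} that the paper's one-line proof sketches; the only stylistic difference is that the paper phrases the argument as an induction on $j$ (building $\Ap_{j+1}(T;n)$ from $\Ap_j(T;n)$ by stepping each $a$ to the next element of $T$ in its residue class), whereas you work top-down, placing each $t \in T$ directly into its unique $\Ap_j(T;n)$ via its residue and the count $|\{a_i, a_i+n, \ldots, t\} \cap T|$. Your explicit attention to the empty-column case, where $a_i = 0$ coalesces with $a_0$ in the set $\Ap(T;n)$, is a point the paper leaves implicit, and it is handled correctly.
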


\begin{proof}
All claims follow from induction on $j$ and the fact that for each $a \in \Ap_j(T;n)$, choosing $k \in \ZZ_{\ge 1}$ minimal so that $a + kn \in T$ ensures $a + kn \in \Ap_{j+1}(T;n)$.  
\end{proof}

We close this section with one final definition and lemma we will use in Section~\ref{sec:structurethm}.  

\begin{defn}\label{d:frob}
The \emph{Frobenius number} of \(T\) is
\[\Frob(T) = d (\max\Ap(S; n_1) - n_1).\]
where \(n_1\) is the smallest generator of \(S\).
When \(d = 1\), we obtain
\[\Frob(S) = \max\big(\Ap(S; n_1)\big) - n_1\]
which coincides with the traditional definition of the Frobenius number.
\end{defn}

\begin{lemma}\label{l:minlenupperbound}
Suppose $T = \<n_1, \ldots, n_k\>$.  If $n \in T$, then
\[
\tfrac{1}{n_k}n \le \MinS(n) \le \tfrac{1}{n_k}n + (n_k - n_1)
\qquad \text{and} \qquad
\M(n) \le \tfrac{1}{n_1}n.
\]
\end{lemma}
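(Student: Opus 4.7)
The bounds $\tfrac{1}{n_k} n \le \MinS(n)$ and $\M(n) \le \tfrac{1}{n_1} n$ are immediate: every factorization $n = q_1 n_1 + \cdots + q_k n_k$ of length $\ell = \sum q_i$ satisfies $\ell n_1 \le n \le \ell n_k$, because each generator lies in $[n_1, n_k]$.  Taking minima and maxima over factorizations yields both inequalities.

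For the upper bound on $\MinS(n)$, the plan is to fix a minimum-length factorization $n = \sum q_i n_i$ of length $\ell = \MinS(n)$ and to show via pigeonhole that $M := \sum_{i<k} q_i \le n_k - 1$.  List the non-$n_k$ tokens of this factorization, with multiplicity, as a sequence $n_{i_1}, \ldots, n_{i_M}$ with each $i_j < k$, and form the partial sums $s_0 = 0, s_1 = n_{i_1}, \ldots, s_M = \sum_m n_{i_m}$.  Assuming for contradiction that $M \ge n_k$, the $M+1 > n_k$ residues $s_0, \ldots, s_M$ modulo $n_k$ must collide, yielding indices $j < j'$ with $V := s_{j'} - s_j = \sum_{m=j+1}^{j'} n_{i_m}$ a positive multiple of $n_k$, say $V = c\,n_k$.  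Since every $n_{i_m} \le n_{k-1} < n_k$, the sum $V$ of $j' - j$ such terms satisfies $V \le (j'-j) n_{k-1} < (j'-j) n_k$, forcing $c < j' - j$.  Replacing the $j' - j$ tokens summing to $V$ by $c$ copies of $n_k$ keeps the total value equal to $n$ but strictly decreases the length, contradicting minimality of $\ell$.

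Once $M \le n_k - 1$ is in hand, the rest is arithmetic:
\[
n_k \MinS(n) - n \;=\; \sum_{i<k} q_i(n_k - n_i) \;\le\; (n_k - n_1)\,M \;\le\; (n_k - n_1)(n_k - 1) \;<\; n_k(n_k - n_1),
\]
so $\MinS(n) < \tfrac{1}{n_k} n + (n_k - n_1)$, which implies the claimed bound.  The hard part will be recognizing the pigeonhole-on-partial-sums move; once that is in place, neither Ap\'ery set nor Frobenius number estimates enter the argument, and the proof requires nothing beyond the observation that collisions of partial sums modulo $n_k$ correspond to length-reducing trades.
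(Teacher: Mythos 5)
Your argument is correct and proceeds by a genuinely different route than the paper's.  The paper splits into cases depending on whether $n \le n_1 n_k$ or $n \ge n_1 n_k$: in the first case the full interval $[\tfrac{1}{n_k}n, \tfrac{1}{n_1}n]$ already has width at most $n_k - n_1$; in the second case it writes $n = n_1 n_k + q n_k - r$, invokes the Frobenius bound $\Frob(S) \le n_1 n_k - n_1 - n_k$ from the cited reference to conclude $n_1 n_k - r \in S$, bounds $\mathsf M(n_1 n_k - r) \le n_k$, and finishes with an integrality argument; it then needs a separate reduction to handle $\gcd(T) > 1$.  Your approach instead establishes the structural fact that a minimum-length factorization can use at most $n_k - 1$ tokens of value less than $n_k$, by the pigeonhole observation that any $n_k$ such tokens force a mod-$n_k$ collision among partial sums, and the resulting sub-sum can be traded for strictly fewer copies of $n_k$.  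This is cleaner: it avoids the case split, avoids the external Frobenius citation entirely, and is indifferent to $\gcd(T)$, so no separate $d > 1$ reduction is needed.  It also yields the marginally sharper estimate $\MinS(n) \le \tfrac{1}{n_k}n + \tfrac{(n_k-n_1)(n_k-1)}{n_k}$.  One small point of hygiene: the paper does not assume a minimal generating set, so $n_{k-1} = n_k$ is permitted; your line ``every $n_{i_m} \le n_{k-1} < n_k$'' should be phrased as ``every $n_{i_m} < n_k$,'' with $M$ counting tokens of value strictly less than $n_k$ rather than tokens of index less than $k$ (any token with value $n_k$ can simply be reassigned to the generator $n_k$, leaving the factorization and its length unchanged).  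With that adjustment the argument is airtight.
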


\begin{proof}
The first and last inequalities above follow from the fact that
\[
(q_1 + \cdots + q_k)n_1 \le q_1n_1 + \cdots + q_kn_k \le (q_1 + \cdots + q_k)n_k
\]
for any factorization $n = q_1n_1 + \cdots + q_kn_k$.  

To prove the remaining inequality, first suppose $d = 1$, so that $S = T$.  We consider two cases.  If $n \le n_1n_k$, then 
\[
\frac{1}{n_1}n - \frac{1}{n_k}n = \frac{n(n_k - n_1)}{n_1n_k} \le n_k - n_1,
\]
so every $\ell \in \mathsf L(n)$ satisfies the desired inequality.  Next, suppose $n \ge n_1n_k$, and write $n = n_1n_k + qn_k - r$ with $q, r \in \ZZ_{\ge 0}$ and $0 \le r < n_k$ by the division algorithm.  We have $n_1n_k - r \ge \Frob(S)$ by \cite[Theorem~3.1.1]{diophantinefrob}, so since $\mathsf M(n_1n_k - r) \le n_k$, there exists a factorization of $n$ of length $\ell \le n_k + q$.  As such, 
\[
\ell
\le n_k + q
= n_1 + q + (n_k - n_1)
= \tfrac{1}{n_k}(n + r) + (n_k - n_1)
< \tfrac{1}{n_k}n + 1 + (n_k - n_1),
\]
and since $\ell \in \ZZ$, we have $\ell \le \tfrac{1}{n_k}n + (n_k - n_1)$.  

Lastly, if $d > 1$, then applying the above argument to $S = \tfrac{1}{d}T$, there exists a factorization of $\tfrac{1}{d}n \in S$ of length at most
\[
\tfrac{1}{n_k}n + \tfrac{1}{d}(n_k - n_1) \le \tfrac{1}{n_k}n + (n_k - n_1),
\]
so there must also exist a factorization of this length for $n \in T$.  
\end{proof}

%%%%%%%%%%%%%%%%%%%%%%%%%%%%%%%%%%%%%%%%%%%%%%%%%%%%%%%%%%%%%%%%%%%%%%%%%
\section{Properties of maximum and minimum factorization length}%%%%%%%%%
\label{sec:maxminlen}%%%%%%%%%%%%%%%%%%%%%%%%%%%%%%%%%%%%%%%%%%%%%%%%%%%%
%raggedbottom%%%%%%%%%%%%%%%%%%%%%%%%%%%%%%%%%%%%%%%%%%%%%%%%%%%%%%%%%%%%

The main results of this section are Theorems~\ref{t:jmaxlen} and~\ref{t:jminlen}, wherein we classify the $j$-th maximum and minimum factorization lengths, respectively (Definition~\ref{d:jmaxminlen}) for sufficiently large $n \in S$.  These form the crux of our proof of Theorem~\ref{t:structurethm}, which makes explicit the phenomenon discussed in Example~\ref{e:mainex} and depicted in Figure~\ref{f:maxminlentables}.  Although there is symmetry between the proofs of these two results, we include a proof for each, as there are some subtle differences in the arguments.  

\begin{notation}\label{n:restofpaper}
For the remainder of this paper, unless otherwise stated, fix a numerical semigroup \(S = \langle n_1, n_2, \dots, n_k \rangle\) that is cofinite in $\ZZ_{\ge 0}$.
Write
\[
S_{\M} =  \langle n_2 - n_1, n_3 - n_1, \dots, n_k - n_1 \rangle
\qquad \text{and} \qquad
\Ap_j(S_{\M}; n_1) = \{b_{0j}, b_{1j}, \ldots\}
\]
where each \(b_{ij} \equiv i \bmod n_1\).  Analogously, write 
\[
S_{\MinS} =  \langle n_k - n_1, n_k - n_2, \dots, n_k - n_{k-1} \rangle
\qquad \text{and} \qquad
\Ap_j(S_{\MinS}; n_k) = \{c_{0j}, c_{1j}, \ldots\}
\]
where each \(c_{ij} + i \equiv 0 \bmod n_k\).  Lastly, let $d = \gcd(S_{\M}) = \gcd(S_{\MinS})$, which can be shown to be equal by an elementary number theory argument.  
\end{notation}

% We briefly recall a characterization of $\mathsf M(n)$ and $\mathsf m(n)$ that is known to hold for sufficiently large $n \in S$.  

% \begin{thm}[{\cite[Theorems~4.2 and~4.3]{elastsets}}]\label{t:maxminquasi}
% If $n > (n_{k-1} - 1)n_k$, then 
% \[
% \mathsf M(n + n_1) = \mathsf M(n) + 1
% \qquad \text{and} \qquad
% \mathsf m(n + n_k) = \mathsf m(n) + 1.
% \]
% \end{thm}

\begin{defn}\label{d:jmaxminlen}
Fix $j \in \ZZ_{\ge 1}$, and suppose \(n \in S\) with \(|\LengthSet(n)| \geq j\).  Define \(\M_j(n)\) and \(\MinS_j(n)\) as the \(j\)-th largest and $j$-th smallest factorization lengths of \(n\), respectively. 
In~particular, \(\M_1(n) = \M(n)\) and \(\MinS_1(n) = \MinS(n)\).
\end{defn}

\begin{thm}\label{t:jmaxlen}
If $j \ge 1$, then for all sufficiently large \(n \in S\) with \(n \equiv i \bmod n_1\),
% sufficiently large (that is, there exists some \(N\) such that for all \(n \geq N\)),
\[\M_j(n) = \frac{n - b_{ij}}{n_1}.\]
\end{thm}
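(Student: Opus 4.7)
The plan is to translate factorizations of $n$ in $S$ into representations of elements of $S_{\M}$ in the residue class of $n$ modulo $n_1$, via a length-indexed bijection. Specifically, a factorization $n = q_1n_1 + q_2n_2 + \cdots + q_kn_k$ of length $\ell$ rearranges to
\[
n - \ell n_1 = q_2(n_2 - n_1) + q_3(n_3 - n_1) + \cdots + q_k(n_k - n_1),
\]
exhibiting $n - \ell n_1$ as an element of $S_{\M}$. Conversely, any representation $n - \ell n_1 = q_2(n_2 - n_1) + \cdots + q_k(n_k - n_1)$ in $S_{\M}$ gives rise to a factorization of $n$ of length $\ell$ provided $q_1 := \ell - (q_2 + \cdots + q_k) \ge 0$. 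Thus, writing $\MinS_{S_{\M}}$ for minimum factorization length in $S_{\M}$, we obtain the key characterization
\[
\ell \in \LengthSet(n) \iff n - \ell n_1 \in S_{\M} \ \text{ and }\ \ell \ge \MinS_{S_{\M}}(n - \ell n_1).
\]

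Next, I would use the characterization to identify $\M_j(n)$. Since $b_{i1} < b_{i2} < \cdots$ is, by Definition~\ref{d:japeryset} and Lemma~\ref{l:japeryset}, the increasing enumeration of elements of $S_{\M}$ congruent to $i \bmod n_1$, the candidate lengths $(n - b_{ij'})/n_1$ for $j' = 1, 2, \ldots$ are exactly the elements of $\tfrac{1}{n_1}(n - S_{\M}) \cap \ZZ$ arranged in decreasing order. If the constraint $\ell \ge \MinS_{S_{\M}}(n - \ell n_1)$ holds for $\ell = (n - b_{ij'})/n_1$ at each $j' = 1, \ldots, j$, then these candidates are genuinely in $\LengthSet(n)$, and the $j$-th of them is $\M_j(n) = (n - b_{ij})/n_1$, as desired.

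It remains to show that this constraint is automatic for all sufficiently large $n$. Since $n_2 - n_1$ is a generator of $S_{\M}$ and any factorization there has length bounded by $b_{ij'}/(n_2 - n_1)$, we have the crude estimate $\MinS_{S_{\M}}(b_{ij'}) \le b_{ij'}/(n_2 - n_1)$ for each $j' \le j$. The required inequality $(n - b_{ij'})/n_1 \ge \MinS_{S_{\M}}(b_{ij'})$ is therefore implied by a lower bound on $n$ that is linear in $b_{ij'}$. Taking the maximum over the (finitely many) residues $i$ modulo $n_1$ and over $j' \in \{1, \ldots, j\}$ yields an $n$-threshold depending only on $S$ and $j$, above which the conclusion holds for every residue class.

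The main conceptual step is the biconditional characterization of $\LengthSet(n)$ above, which is straightforward once one separates the ``free'' coordinate $q_1$ from the constrained positive combination in $S_{\M}$; the quantitative threshold argument is routine. The one subtlety to watch for is the uniformity in $i$: we cannot let the threshold depend on $n$ through $i$, but since there are only $n_1$ residue classes, a single uniform bound suffices, and no genuine obstacle arises.
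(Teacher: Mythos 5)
Your proof is correct and takes essentially the same approach as the paper: both hinge on the rearrangement $n - \ell n_1 = q_2(n_2 - n_1) + \cdots + q_k(n_k - n_1)$ that identifies factorization lengths of $n$ with elements of $S_{\M}$ in the residue class of $n$ modulo $n_1$. Your biconditional characterization of $\LengthSet(n)$ is a clean packaging of the same surjectivity argument the paper makes explicitly by constructing, for each $b_{ij}$ with $j \le h$ and $n$ large enough, a factorization of $n$ from a fixed factorization of $b_{ij}$ in $S_{\M}$.
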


\begin{proof}
Fix $n \in S$, and write \(n = pn_1 + i\) for \(p, i \in \ZZ\) with \(0 \le i < n_1\).  
Consider a factorization
\[
n = q_1n_1 + q_2n_2 + \cdots + q_kn_k
\]
of $n$, whose length is $\ell = q_1 + q_2 + \cdots + q_k$. Letting
\begin{equation}\label{eq:factassoc}
b = (p - \ell)n_1 + i = n - \ell n_1 = q_2(n_2 - n_1) + \cdots + q_k(n_k - n_1),
\end{equation}
we see $b \in S_{\M}$ and $b \equiv i \bmod n_1$, so $b = b_{ij}$ for some $j \ge 1$ by Lemma~\ref{l:japeryset}.  Note $i$ and $j$ only depend on $n$ and $\ell$, and not on the specific values of $q_1, \ldots, q_k$.  In particular, we have obtained a map 
\[
f:\mathsf L(n) \to \{b_{ij} : j \ge 1\}
\qquad \text{given by} \qquad
\ell \mapsto n - \ell n_1,
\]
which this associates, to each length $\ell \in \mathsf L(n)$, an element $b_{ij} \in S_{\M}$. 

Now, write $\mathsf L(n) = \{\ell_1 > \ell_2 > \cdots\}$.  We claim that, for each fixed $h \ge 1$, if $n$ is sufficiently large the map $f$ induces a bijection
\begin{equation}\label{eq:maxbijection}
\{\ell_1, \ldots, \ell_h\} \rightsquigarrow \{b_{i1}, \ldots, b_{ih}\}.
\end{equation}
Indeed, fix $j \le h$.  For any factorization
\[
b_{ij} = Q_2(n_2 - n_1) + \cdots + Q_k(n_k - n_1)
\]
of $b_{ij}$ in $S_{\M}$, we have
$$z = b_{ij} + (Q_2 + \cdots + Q_k)n_1 = Q_2n_2 + \cdots + Q_kn_k \in S.$$
As such, choosing $\ell$ so that $n - \ell n_1 = b_{ij}$, if $n \in n_1 \ZZ_{\ge 0} + z$, then 
\[
n = b_{ij} + \ell n_1 = (\ell - Q_2 + \cdots + Q_k)n_1 + Q_2n_2 + \cdots + Q_kn_k
\]
is a factorization of length $\ell \in \mathsf L(n)$, and $f(\ell) = b_{ij}$.  In particular, this proves~\eqref{eq:maxbijection} is a bijection when $n$ is sufficiently large.  As a final step, choosing $\ell = \mathsf M_j(n)$ and solving~\eqref{eq:factassoc} for $\ell$ then yields the desired equality.  
\end{proof}

\begin{thm}\label{t:jminlen}
If $j \ge 1$, then for all sufficiently large \(n \in S\) with \(n \equiv i' \bmod n_k\), 
\[\MinS_j(n) = \frac{n  + c_{i'j}}{n_k}.\]
\end{thm}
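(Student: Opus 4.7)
The plan is to mirror the proof of Theorem~\ref{t:jmaxlen}, substituting $n_k$ for $n_1$ and $S_{\MinS}$ for $S_{\M}$, with the crucial twist that the associated affine map is now \emph{increasing} rather than decreasing in $\ell$, so minimum lengths correspond to minimum elements on the $S_{\MinS}$ side. Given a factorization $n = q_1 n_1 + \cdots + q_k n_k$ of length $\ell = q_1 + \cdots + q_k$, I set
\[
c = \ell n_k - n = q_1(n_k - n_1) + q_2(n_k - n_2) + \cdots + q_{k-1}(n_k - n_{k-1}),
\]
which lies in $S_{\MinS}$ and satisfies $c + i' \equiv c + n \equiv 0 \bmod n_k$. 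By Lemma~\ref{l:japeryset} together with the indexing convention $c_{ij} + i \equiv 0 \bmod n_k$, we have $c = c_{i'j}$ for some $j \ge 1$ that depends only on $n$ and $\ell$. This gives a well-defined, strictly increasing, and hence injective, map $f \colon \LengthSet(n) \to \{c_{i'j} : j \ge 1\}$ sending $\ell \mapsto \ell n_k - n$.

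Fix $h \ge 1$ and enumerate $\LengthSet(n) = \{\ell_1 < \ell_2 < \cdots\}$. I would next show that for all $n$ sufficiently large (depending on $h$), $f$ restricts to an order-preserving bijection $\{\ell_1, \ldots, \ell_h\} \to \{c_{i'1}, \ldots, c_{i'h}\}$. For each $j \le h$, fix a factorization $c_{i'j} = Q_1(n_k - n_1) + \cdots + Q_{k-1}(n_k - n_{k-1})$ in $S_{\MinS}$ and set $z_j = Q_1 n_1 + \cdots + Q_{k-1} n_{k-1} \in S$; observe that $z_j = (Q_1 + \cdots + Q_{k-1})n_k - c_{i'j}$, so $z_j \equiv i' \bmod n_k$. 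Whenever $n \equiv i' \bmod n_k$ and $n \ge \max_{j \le h} z_j$, we may write $n = z_j + m_j n_k$ with $m_j \ge 0$, yielding a factorization $n = Q_1 n_1 + \cdots + Q_{k-1} n_{k-1} + m_j n_k$ of length $\ell = (Q_1 + \cdots + Q_{k-1}) + m_j$ for which $f(\ell) = c_{i'j}$. Thus $\{c_{i'1}, \ldots, c_{i'h}\} \subseteq f(\LengthSet(n))$, and since $f$ is an order-preserving injection and $c_{i'1} < c_{i'2} < \cdots$, the preimages of $c_{i'1}, \ldots, c_{i'h}$ under $f$ must be precisely $\ell_1, \ldots, \ell_h$.

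Taking $h = j$ then gives $\MinS_j(n) = \ell_j = (n + c_{i'j})/n_k$, as required. The main obstacle is bookkeeping rather than conceptual: one must carefully track the sign and residue conventions so that the ``reversed'' indexing $c_{ij} + i \equiv 0 \bmod n_k$ aligns with the residue $i' \equiv n \bmod n_k$, and verify that the direction change of $f$ (compared to the map $\ell \mapsto n - \ell n_1$ used for $\M_j$) correctly sends smallest lengths to smallest $S_{\MinS}$-values rather than largest to largest. Beyond these conventions, no new ideas beyond those already employed in the proof of Theorem~\ref{t:jmaxlen} are required.
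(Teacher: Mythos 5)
Your proof is correct and follows exactly the same approach as the paper's, which simply defers the details of the order-preserving bijection to the analogous argument in Theorem~\ref{t:jmaxlen}. You have correctly filled in those deferred details, including the residue check $z_j \equiv i' \bmod n_k$ and the observation that an order-preserving injection sending $\{\ell_1 < \cdots < \ell_h\}$ into $\{c_{i'j} : j \ge 1\}$ with $\{c_{i'1},\ldots,c_{i'h}\}$ in its image must carry $\ell_j$ to $c_{i'j}$.
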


\begin{proof}
Fix $n \in S$, and write \(n = pn_k + i'\) for \(p, i' \in \ZZ\) with \(0 \le i' < n_k\).  
If
\[
n = q_1n_1 + q_2n_2 + \cdots + q_kn_k
\]
is a factorization of $n$ with length $\ell = q_1 + q_2 + \cdots + q_k$, then letting
\begin{equation}\label{eq:minbijection}
c = (\ell - p)n_k - i' = \ell n_k - n = q_1(n_k - n_1) + \cdots + q_{k-1}(n_k - n_{k-1}),
\end{equation}
we see $c \in S_{\MinS}$ and $c + i' \equiv 0 \bmod n_k$, so $c = c_{i'j}$ for some $j \ge 1$ by Lemma~\ref{l:japeryset}.  This yields a map 
\[
f:\mathsf L(n) \to \{c_{i'j} : j \ge 1\}
\qquad \text{given by} \qquad
\ell \mapsto \ell n_k - n,
\]
which this associates, to each length $\ell \in \mathsf L(n)$, an element $c_{i'j} \in S_{\MinS}$. 
Now, writing $\mathsf L(n) = \{\ell_1 < \ell_2 < \cdots\}$, we can show by a similar argument to the proof of Theorem~\ref{t:jmaxlen} that for each fixed $h \ge 1$, if $n$ is sufficiently large the map $f$ induces a bijection
\[
\{\ell_1, \ldots, \ell_h\} \rightsquigarrow \{c_{i'1}, \ldots, c_{i'h}\}.
\]
Solving~\eqref{eq:minbijection} for $\ell = \mathsf m_j(n)$ completes the proof.  
\end{proof}

% \begin{example}\label{e:jmaxlen}
% % Let \(S = \langle 5, 9, 12 \rangle\), for which
% % \(S_{\M} = \langle 4, 7 \rangle.\)
% % One can compute \(\M_1(100) = 20\) and \(b_{01} = 0\), which agrees with Theorem~\ref{t:jmaxlen}.  
% % In fact, we note that 
% Each factorization length in Figure~\ref{f:maxlenset} can be calculated with Theorem~\ref{t:jmaxlen} using the value \(b_{ij}\) obtained from the corresponding entry in Figure~\ref{f:maxlenapery}.  In~fact, Theorem~\ref{t:jmaxlen} proves the relative positions of the missing entries in Figure~\ref{f:maxlenapery} coincide with those in Figure~\ref{f:maxlenset}, as was aluded to in Example~\ref{e:mainex}.  
% An~analogous correspondence is depicted in Figures~\ref{f:minlenapery} and~\ref{f:minlenset} after reflection; this reflection is the reason that subtraction appears in the numerator in Theorem~\ref{t:jmaxlen} while addition appears in the numerator in Theorem~\ref{t:jminlen}.
% \end{example}

\begin{remark}\label{r:maxminquasi}
It was proven in \cite[Theorems~4.2 and~4.3]{elastsets} that 
\[
\mathsf M(n + n_1) = \mathsf M(n) + 1
\qquad \text{and} \qquad
\mathsf m(n + n_k) = \mathsf m(n) + 1
\]
for sufficiently large $n \in S$.  Corollary~\ref{c:jmaxminquasi} (below) is a generalization of this result.  

Another way to state this result is that there exist $n_1$- and $n_k$-periodic functions $f_S(n)$ and $g_S(n)$, respectively, such that for all sufficiently large $n \in S$,
\[
\mathsf M(n) = \tfrac{1}{n_1}n + f_S(n)
\qquad \text{and} \qquad
\mathsf m(n) = \tfrac{1}{n_k}n + g_S(n).
\]
The question was posed in~\cite[Project~3]{beyondcoins} to characterize the functions $f_S$ and $g_S$ in terms of the generators of $S$.  Theorems~\ref{t:jmaxlen} and~\ref{t:jminlen} answer this question, expressing $f$ and $g$ in terms of the elements of $\Ap(S_{\M}; n_1)$ and $\Ap(S_{\MinS}; n_k)$, respectively.  It was also asked in~\cite{beyondcoins} whether it is possible $f_S = f_{S'}$ and $g_S = g_{S'}$ for distinct numerical semigroups $S$ and $S'$; in addition to identfying when this occurs in terms of Ap\'ery sets, our results provide a rubric for constructing examples.  For instance, consider
\[
S = \<10,16,44,49,51\>
\qquad \text{and} \qquad
S' = \<10,16,38,44,49,51\>.
\]
It is not hard to check $S_{\MinS} = S_{\MinS}' = \<2,7\>$, so $\Ap(S_{\MinS}; 51) = \Ap(S_{\MinS}; 51)$ and thus $g_S = g_{S'}$.  However, $S_{\M}' \setminus S_{\M} = \{28,56,67\}$, even though
\[
\Ap(S_{\M};10) = \Ap(S_{\M}'; 10) = \{0, 41, 12, 53, 24, 45, 6, 47, 18, 39\}
\]
and thus $f_S = f_{S'}$.  
\end{remark}

\begin{cor}\label{c:jmaxminquasi}
Fix $j \ge 1$.  For all sufficiently large $n \in S$, we have
\[
\mathsf M_j(n + n_1) = \mathsf M_j(n) + 1
\qquad \text{and} \qquad
\mathsf m_j(n + n_k) = \mathsf m_j(n) + 1.
\]
\end{cor}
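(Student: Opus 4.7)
The plan is to apply Theorems~\ref{t:jmaxlen} and~\ref{t:jminlen} directly, exploiting the fact that the shift $n \mapsto n + n_1$ preserves the residue class of $n$ modulo $n_1$ (and similarly for $n_k$), so that the Ap\'ery element appearing in each formula is unchanged. In effect, the corollary reduces to subtraction of the two explicit formulas applied to $n$ and to $n + n_1$ (respectively $n$ and $n + n_k$).

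First I would fix $j \ge 1$ and take $n \in S$ large enough that Theorem~\ref{t:jmaxlen} applies to both $n$ and $n + n_1$; this implicitly guarantees $|\LengthSet(n)| \ge j$ and $|\LengthSet(n + n_1)| \ge j$, so both $\M_j(n)$ and $\M_j(n + n_1)$ are defined. Writing $n \equiv i \bmod n_1$, we also have $n + n_1 \equiv i \bmod n_1$, so the same element $b_{ij} \in \Ap_j(S_{\M}; n_1)$ appears in the formulas for $\M_j(n)$ and $\M_j(n+n_1)$. Substituting and subtracting yields
\[
\M_j(n + n_1) - \M_j(n) = \frac{(n + n_1) - b_{ij}}{n_1} - \frac{n - b_{ij}}{n_1} = 1.
\]

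The argument for $\MinS_j$ is entirely symmetric: I would take $n$ large enough that Theorem~\ref{t:jminlen} applies to both $n$ and $n + n_k$, and write $n \equiv i' \bmod n_k$, so that $n + n_k \equiv i' \bmod n_k$ as well; both formulas then involve the same $c_{i'j} \in \Ap_j(S_{\MinS}; n_k)$, and subtracting gives
\[
\MinS_j(n + n_k) - \MinS_j(n) = \frac{(n + n_k) + c_{i'j}}{n_k} - \frac{n + c_{i'j}}{n_k} = 1.
\]
Since the substantial work has already been carried out in Theorems~\ref{t:jmaxlen} and~\ref{t:jminlen}, there is no real obstacle here; the only mild subtlety worth flagging is that the ``sufficiently large'' in the corollary must be chosen to simultaneously place both $n$ and $n + n_1$ (resp.\ $n + n_k$) in the good range of the underlying theorem, which is immediate because that range is upward-closed.
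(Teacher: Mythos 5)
Your proof is correct and takes precisely the same route as the paper, which simply cites Theorems~\ref{t:jmaxlen} and~\ref{t:jminlen}; you have merely spelled out the substitution and the observation that adding $n_1$ (resp.\ $n_k$) preserves the relevant residue class, which is exactly the intended application.
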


\begin{proof}
Apply Theorems~\ref{t:jmaxlen} and~\ref{t:jminlen}.  
\end{proof}

% \begin{cor}\label{c:jmaxlen}
% Every \(n \in S\) satisfies \(\M_1(n) \le \tfrac{1}{n_1}(n - b_{i1})\), and strict equality holds for only finitely many \(n \in S\). 
% \end{cor}

% \begin{proof}
% The second claim follows immediately from Theorem~\ref{t:jmaxlen} once we prove the first claim.  
% By way of contradiction, suppose \(\M_1(n) > \tfrac{1}{n_1}(n - b_{i1})\) for some \(n \in S\).  There exists \(s \gg 0\) such that \(\M_1(s) = \tfrac{1}{n_1}(s - b_{i1})\) by Theorem~\ref{t:jmaxlen}.  Writing \(s = n + pn_1\) for some \(p \in \ZZ\), then we have 
% \begin{align*}
%     \M_1(s) &= \frac{s - b_{i1}}{n_1}
%     = \frac{(n + pn_1) - b_{i1}}{n_1}
%     = \frac{n - b_{i1}}{n_1} + p.
% \end{align*}
% We then also have that
% \begin{align*}
%     \frac{n - b_{i1}}{n_1} + p = \M_1(n + pn_1) &\geq \M_1(n) + p > \frac{n - b_{i1}}{n_1} + p,
% \end{align*}
% which is a contradiction. 
% % Thus there is no \(n \in S\) such that \(\M_1(n) > \frac{n - b_{i1}}{n_1}\).
% \end{proof}

%%%%%%%%%%%%%%%%%%%%%%%%%%%%%%%%%%%%%%%%%%%%%%%%%%%%%%%%%%%%%%%%%%%%%%%%%
\section{The refined structure theorem for numerical semigroups}%%%%%%%%%
\label{sec:structurethm}%%%%%%%%%%%%%%%%%%%%%%%%%%%%%%%%%%%%%%%%%%%%%%%%%
%raggedbottom%%%%%%%%%%%%%%%%%%%%%%%%%%%%%%%%%%%%%%%%%%%%%%%%%%%%%%%%%%%%

In this section, we prove our main result:\ a refinement of the structure theorem for sets of length for numerical semigroups (Theorem~\ref{t:structurethm}).  We also give an explicit bound on when the structure theorem holds (Theorem~\ref{t:bound}) and discuss the ramifications of this bound (Remark~\ref{r:deltaset}).  

\begin{notation}\label{n:maxminmissinglens}
For each $i \in \{0, 1, \ldots, n_1-1\}$, let 
\[
A_i = \{r \in \ZZ_{\ge 1} : b_{i1} + r d n_1 \notin S_{\M}\}
\]
and for each $i' \in \{0, 1, \ldots, n_k-1\}$, let
\[
A'_{i'} = \{r' \in \ZZ_{\ge 1} : c_{i'1} + r' d n_k \notin S_{\MinS}\}.
\]
% be the set of integers corresponding the \(i\)-th equivalence class modulo \(n_1\) correlated with the gaps of \(S_{\M}\) that appear beyond \(b_{i1}\).
% Additionally, let
% \[
% t = \bigg\lceil \frac{\Frob(S_{\M}) + 1}{dn_1} \bigg\rceil - 1
% \quad \text{and} \quad
% t' = \bigg\lceil \frac{\Frob(S_{\MinS}) + 1}{dn_k} \bigg\rceil - 1
% \]
\end{notation}

\begin{thm}\label{t:structurethm}
For all sufficiently large \(n \in S\) with \(n \equiv i \bmod n_1\) and \(n \equiv i' \bmod n_k\),
\[\LengthSet(n) = \{\MinS(n), \MinS(n) + d, \dots, \M(n) - d, \M(n)\} \setminus \big( (dA'_{i'} + \MinS(n)) \cup (-dA_i + \M(n)) \big)\]
\end{thm}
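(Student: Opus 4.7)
The plan is to separately establish necessary and sufficient conditions for each candidate length $\ell \in \{\MinS(n), \MinS(n) + d, \ldots, \M(n)\}$ to lie in $\LengthSet(n)$. For the \emph{necessary} direction, I would observe (as in the proofs of Theorems~\ref{t:jmaxlen} and~\ref{t:jminlen}) that any factorization $n = q_1 n_1 + \cdots + q_k n_k$ of length $\ell$ yields $b := n - \ell n_1 = \sum_{j \ge 2} q_j (n_j - n_1) \in S_{\M}$ and $c := \ell n_k - n = \sum_{j \le k-1} q_j(n_k - n_j) \in S_{\MinS}$. By Theorem~\ref{t:jmaxlen} applied with $j = 1$, for sufficiently large $n$ one has $\M(n) = (n - b_{i1})/n_1$, so writing $\ell = \M(n) - dr$ gives $b = b_{i1} + r d n_1$; hence $b \in S_{\M}$ iff $r \notin A_i$. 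Symmetrically (via Theorem~\ref{t:jminlen}), $c \in S_{\MinS}$ iff $r' \notin A'_{i'}$, where $\ell = \MinS(n) + d r'$. This gives the inclusion $\LengthSet(n) \subseteq \{\MinS(n), \ldots, \M(n)\} \setminus ((dA'_{i'} + \MinS(n)) \cup (-dA_i + \M(n)))$.

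For the \emph{sufficient} direction, I would invert these associations to lift factorizations from $S_\M$ or $S_{\MinS}$ up to $S$. Suppose $b \in S_{\M}$ and pick a factorization $b = Q_2(n_2-n_1) + \cdots + Q_k(n_k-n_1)$ in $S_\M$; rearranging yields $n = (\ell - \sum_j Q_j) n_1 + Q_2 n_2 + \cdots + Q_k n_k$, which is a valid factorization of $n$ of length $\ell$ provided $\sum_j Q_j \le \ell$. Choosing a minimum-length factorization in $S_\M$ and applying Lemma~\ref{l:minlenupperbound} bounds $\sum_j Q_j = \MinS_{S_\M}(b) \le b/(n_k-n_1) + (n_k-n_2)$, and substituting $b = n - \ell n_1$ reduces the requirement to the explicit threshold $\ell \ge n/n_k + (n_k-n_1)(n_k-n_2)/n_k$. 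An entirely symmetric lifting starting from $c \in S_{\MinS}$ yields $\ell \in \LengthSet(n)$ whenever $\ell \le n/n_1 - (n_k-n_1)(n_{k-1}-n_1)/n_1$.

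A direct computation shows these two lifting ranges together cover $[\MinS(n), \M(n)]$ once $n$ exceeds an explicit constant (roughly $n_k(n_{k-1}-n_1) + n_1(n_k-n_2)$), since the overlap grows linearly in $n$ while each threshold deficit is a constant depending only on the generators. For such $n$ and any candidate $\ell$ with $r \notin A_i$ and $r' \notin A'_{i'}$, we have $b \in S_\M$ and $c \in S_{\MinS}$, so at least one lifting applies and $\ell \in \LengthSet(n)$. The arithmetic-progression backbone itself follows from an elementary computation: comparing two factorizations of $n$ gives $n_1(\ell - \ell') = -\sum_{j \ge 2}(q_j - q_j')(n_j - n_1) \in d\ZZ$, and $\gcd(n_1, d) = 1$ forces $\ell \equiv \ell' \pmod d$. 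The main obstacle is the overlap bookkeeping: one must verify that for every $\ell$ in the candidate progression (especially near the endpoints), at least one lifting range applies, and that the ``sufficiently large'' threshold on $n$ simultaneously accommodates the $j=1$ hypotheses of Theorems~\ref{t:jmaxlen} and~\ref{t:jminlen} as well as the lifting-range overlap; making this precise is what should yield the explicit bound promised by Theorem~\ref{t:bound}.
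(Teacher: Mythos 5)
Your proof is correct, and it takes a genuinely different route from the paper's. The paper's proof of Theorem~\ref{t:structurethm} invokes the abstract structure theorem for sets of length together with \cite[Proposition~2.9]{delta} as a black box to guarantee that the \emph{middle} of the progression, namely $(\mathsf{m}(n) + d\ZZ) \cap [\MinS(n) + t'd, \M(n) - td]$, lies entirely in $\mathsf{L}(n)$; it then only needs Theorems~\ref{t:jmaxlen} and~\ref{t:jminlen} (for $j \le t$ and $j \le t'$, respectively) to translate the near-endpoint deviations into the sets $A_i$ and $A'_{i'}$. You instead bypass the black box entirely: the ``necessary'' direction comes directly from the $b = n - \ell n_1 \in S_\M$ and $c = \ell n_k - n \in S_{\MinS}$ correspondences (equations~\eqref{eq:factassoc} and~\eqref{eq:minbijection}), the congruence $\ell \equiv \ell' \pmod d$ from $\gcd(n_1,d)=1$, and $j=1$ of Theorems~\ref{t:jmaxlen}/\ref{t:jminlen}; the ``sufficient'' direction comes from lifting minimal-length factorizations in $S_\M$ or $S_{\MinS}$ back to $S$, bounding the lifting thresholds via Lemma~\ref{l:minlenupperbound}, and checking the two thresholds overlap once $n$ exceeds an explicit constant. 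This lifting-and-overlap argument is essentially the content of the paper's separate Theorem~\ref{t:bound}, so what you have done is fold the quantitative bound into the structure theorem itself rather than first proving the qualitative statement and then bootstrapping to the bound (the paper's Theorem~\ref{t:bound} actually cites Theorem~\ref{t:structurethm} in its proof, which your version avoids). The tradeoff: your argument is self-contained and yields an explicit threshold immediately, at the cost of a somewhat heavier single proof; the paper's version is shorter and more modular, at the cost of temporarily relying on the abstract structure theorem without a concrete bound. Both are valid; yours is arguably the cleaner logical organization for a reader who wants everything proved from scratch.
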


\begin{proof}
By the structure theorem for sets of length and \cite[Proposition~2.9]{delta}, there exist $t, t' \in \ZZ_{\ge 1}$ such that for all sufficiently large $n \in S$, 
% \[
% \{\MinS(n) + t'd, \MinS(n) + (t'+1)d, \ldots, \M(n) - td\} \subseteq \mathsf L(n)
% \]
\[
(\mathsf m(n) + d\ZZ) \cap [\MinS(n) + t'd, \M(n) - td] \subseteq \mathsf L(n)
\]
Fix $n \in S$ with \(n \equiv i \bmod n_1\) and \(n \equiv i' \bmod n_k\) large enough that (i)~the above holds, (ii)~Theorem~\ref{t:jmaxlen} holds for $j \le t$, and (iii)~Theorem~\ref{t:jminlen} holds for $j \le t'$.  
% Fix 
% \[
% \ell \in \{\MinS(n), \MinS(n) + d, \ldots, \M(n) - d, \M(n)\}.
% \]

First, suppose $\ell = \M(n) - rd$ for some $r \le t$, and let $b = b_{i1} + rdn_1$.  If $r \notin A_i$, then $b \in S_\M$, meaning $b = b_{ij}$ for some $j$, and thus 
\[
\ell
= \M(n) - rd
= \frac{n - b_{i1} - rdn_1}{n_1}
= \frac{n - b_{ij}}{n_1}
= M_j(n)
\in \mathsf L(n)
\]
by Theorem~\ref{t:jmaxlen}.  Conversely, if $\ell \in \mathsf L(n)$, then since $r \le t$, Theorem~\ref{t:jmaxlen} implies 
\[
\ell = \M_j(n) = \frac{n - b_{ij}}{n_1}
\]
for some $j$.  Rearranging, we find 
\[
b_{ij}
= n - \ell n_1
= n - (\M(n) - rd)n_1
= n - (n - b_{i1}) + rdn_1
= b_{i1} + rdn_1
= b
\]
which means $b \in S_\M$
and thus $r \notin A_i$.  

Now, by an analogous argument, if $\ell = \MinS(n) + dr'$ for some $r' \le t'$, then Theorem~\ref{t:jminlen} implies $\ell \in \mathsf L(n)$ if and only if $r' \notin A_{i'}'$.  This completes the proof.  
% \[
% (dA_i + \M(n)) \cap \mathsf L(n) = \emptyset
% \qquad \text{and} \qquad
% (dA'_{i'} + \MinS(n)) \cap \mathsf L(n) = \emptyset,
% \]
% respectively, for all sufficiently large $n$.  
\end{proof}

\begin{remark}
It was shown in~\cite[Corollary~5.5]{factorhilbert} that 
\[
|\LengthSet(n + n_1n_k)| = |\LengthSet(n)| + \tfrac{1}{d}(n_k - n_1)
\]
for sufficient large \(n \in S\).  This also an immediate consequence of Theorem~\ref{t:structurethm}.  
% , since \(n \bmod n_1 = n  + \lcm(n_1, n_k) \bmod n_1\) and \(n \bmod n_k = n  + \lcm(n_1, n_k) \bmod n_k\).
\end{remark}

In the remainder of this section, we identify an explicit bound on the ``sufficiently large $n \in S$'' in the statement of Theorem~\ref{t:structurethm}.  First, we obtain the constants $t$ and $t'$ in the (unrefined) structure theorem for sets of length.  

\begin{prop}\label{p:tbounds}
For each $i$ and $i'$, we have $A_i \subseteq A_0$ and $A_{i'}' \subseteq A_0'$.  In~particular,  
\[
t = \max(A_0)
% = \max(\ZZ_{\ge 0} \setminus \tfrac{1}{dn_1}(S_{\M} \cap dn_1\ZZ))
\qquad \text{and} \qquad
t' = \max(A_0'),
% = \max(\ZZ_{\ge 0} \setminus \tfrac{1}{dn_k}(S_{\MinS} \cap dn_k\ZZ)),
\]
% \[
% t = \bigg\lfloor \frac{\Frob(S_{\M})}{dn_1} \bigg\rfloor
% \quad \text{and} \quad
% t' = \bigg\lfloor \frac{\Frob(S_{\MinS})}{dn_k} \bigg\rfloor.
% \]
are the minimal values so that $A_i \subseteq [1, t]$ and $A_{i'}' \subseteq [1, t']$ for all $i$ and $i'$, respectively.  
% The value of \(t\) 
% is the smallest integer value for which
% \[\min(b_{i1}) + (t + 1) \cdot d \cdot n_1 > \Frob(S_{\M}).\]
% Then \(t\) 
% is minimal such that for all \(i\) we have \(A_i \subseteq [1, t]\).
\end{prop}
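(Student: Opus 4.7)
My approach is to establish the two containments $A_i \subseteq A_0$ and $A_{i'}' \subseteq A_0'$ by directly invoking the additive closure of $S_\M$ and $S_\MinS$, then deduce the minimality of $t$ and $t'$ as formal consequences of these containments together with the finiteness of $A_0$ and $A_0'$.

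For the first containment I would argue by contrapositive. Fix $r \in \ZZ_{\ge 1}$ with $r \notin A_0$, so that $r d n_1 \in S_\M$. The key observation is that $b_{i1} \in S_\M$ in all cases: Definition~\ref{d:aperyset} forces $b_{i1}$ to be either the minimum element of $S_\M$ in its congruence class mod $n_1$, or $0$, and both values lie in $S_\M$. Additive closure of $S_\M$ then yields $b_{i1} + r d n_1 \in S_\M$, i.e., $r \notin A_i$. The symmetric argument, run with $c_{i'1} \in S_\MinS$ in place of $b_{i1}$ and $n_k$ in place of $n_1$, establishes $A_{i'}' \subseteq A_0'$.

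For the minimality statements I would first note that $A_0$ is finite: since $\gcd(S_\M) = d$, the rescaled semigroup $\tfrac{1}{d} S_\M$ has $\gcd 1$ and is therefore cofinite in $\ZZ_{\ge 0}$, so $d \ZZ_{\ge 0} \setminus S_\M$ is finite and all but finitely many $r d n_1$ lie in $S_\M$. Thus $t = \max(A_0)$ is well-defined, and the chain $A_i \subseteq A_0 \subseteq [1, t]$ shows $t$ is a valid upper bound for every $i$; minimality is immediate because $\max(A_0) \in A_0$ forces any valid upper bound to be at least $\max(A_0)$. The analogous argument yields $t' = \max(A_0')$ as the minimal upper bound for the sets $A_{i'}'$. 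I do not anticipate any substantive obstacle: the entire argument rests on semigroup closure together with the finiteness of the gap set within $d \ZZ_{\ge 0}$, and the only point requiring a moment of care is the degenerate case $b_{i1} = 0$, which fits the proof without modification since $0 \in S_\M$.
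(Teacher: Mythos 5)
Your proof is correct and follows essentially the same route as the paper's: use $b_{01} = 0$ so that $r \notin A_0$ gives $r d n_1 \in S_\M$, then invoke $b_{i1} \in S_\M$ and additive closure to conclude $b_{i1} + r d n_1 \in S_\M$, with the symmetric argument for the primed sets. You flesh out the finiteness of $A_0$ and the minimality of $\max(A_0)$ more explicitly than the paper does, but the core argument is identical.
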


\begin{proof}
% Notice that \(t\) is the smallest integer value for which
% \[
% \min(b_{i1}) + d n_1 (t + 1) > \Frob(S_{\M}).
% \]
% If $q \in A_i$, then $g = b_{i1} + qdn_1 \notin S_{\M}$.  Since $b_{i1} \in S_{\M}$, $d \mid b_{i1}$ and thus $d \mid g$, so~$g \le \Frob(S_{\M})$.  As such, since $q \in \ZZ$ and
% \[
% q = \frac{g - b_{i1}}{dn_1} \le \frac{\Frob(S_{\M}) - b_{i1}}{dn_1} \le \frac{\Frob(S_{\M})}{dn_1},
% \]
% we have $q \le t$.  An analogous argument proves the remaining claims.  
If $r \in \ZZ_{\ge 1} \setminus A_0$, then 
\[
b = b_{01} + dn_1r = dn_1r \in S_{\M}.
\]
This means, for any $i$, we have 
\[
b + b_{i1} = b_{i1} + dn_1r \in S_{\M},
\]
so $r \in \ZZ_{\ge 1} \setminus A_i$.  This proves $A_i \subseteq A_0$.  
An analogous argument proves each $A_{i'}' \subseteq A_0'$, and the remaining claims follow from
the fact that 
\[
t = \max(A_0)
\qquad \text{and} \qquad
t' = \max(A_0'),
\]
and from applying Theorems~\ref{t:jmaxlen} and~\ref{t:jminlen}.  
\end{proof}

\begin{remark}\label{r:classification}
In addition to yielding upper bounds
\[
t \le \tfrac{1}{dn_1}\Frob(S_{\M})
\qquad \text{and} \qquad
t' \le \tfrac{1}{dn_k}\Frob(S_{\MinS})
\]
on $t$ and $t'$ in the structure theorem, Proposition~\ref{p:tbounds} has implications on questions concerning of which combinations of ``missing'' lengths can occur, which have been considered for other families of semigroups~\cite{subdeltas,deltarealizationnumerical,deltarsetealization,setdistanceskrull}.  
Letting 
\[
\mathcal A_S = \{A_i : 0 \le i \le n_1 - 1\}
\qquad \text{and} \qquad
\mathcal A_S' = \{A_{i'}' : 0 \le i' \le n_k - 1\},
\]
Proposition~\ref{p:tbounds} implies $\bigcup \mathcal A_S \in \mathcal A_S$ and $\bigcup \mathcal A_S' \in \mathcal A_S'$, the first known restrictions on $\mathcal A_S$ and $\mathcal A_S'$ for numerical semigroups.  
Additionally, under the mild assumption $\gcd(n_1, n_k) = 1$, there are infinitely many $n \in S$ for which
\[\LengthSet(n) = \{\MinS(n), \MinS(n) + d, \dots, \M(n) - d, \M(n)\} \setminus \big( (dA' + \MinS(n)) \cup (-dA + \M(n)) \big)\]
for each pair $A \in \mathcal A_S$ and $A' \in \mathcal A_S'$.  
In particular, in order to classify the possible combinations of ``missing'' lengths from the ``top'' and ``bottom'' of the length sets of large $n \in S$, it suffices to classify $\mathcal A_S$ and $\mathcal A_S'$ independently.  
% As such, in order to obtain a realization theorem of the form proven in~\cite{realizthm}, it suffices to locate $S$ for which every subset of $A_0 = [1,t]$ and $A_0 = [1, t']$ occurs as some $A_i$ and $A_{i'}'$, respectively.  
% It is proven in~\cite{realizthm} that within the family of Krull monoids with finite class group, there exist monoids for which every 
\end{remark}

In view of Remark~\ref{r:classification}, we state the following question, posed by Geroldinger in private communication with the second author and answered in the affirmative in~\cite{realizthm} for the family of Krull monoids with finite class group.  

\begin{question}\label{q:geroldinger}
Given $d, t, t' \in \ZZ_{\ge 1}$, does there exist a numerical semigroup $S$ such that $\mathcal A_S$ and $\mathcal A_S'$ equal the power sets of $[1,t]$ and $[1,t']$, respectively, and $d = \gcd(S_{\M})$?  
\end{question}

% Let \(f : S_{\M} \to S\) and \(g : S_{\MinS} \to S\) be given by
% \[
% f(b) = \M_{S_{\M}}(b)n_1 + b
% \qquad \text{and} \qquad
% g(c) = \M_{S_{\MinS}}(c)n_k - c,
% \]
% where $\M_{S_{\M}}(b)$ and $\M_{S_{\MinS}}(c)$ denote the maximum factorization lengths of $b$ and $c$ in $S_{\M}$ and $S_{\MinS}$, respectively.  
% We have $f(b) \in S$ for each $b \in S_{\M}$ since $(f(b), \M_{S_{\M}}(b))$ corresponds to $b$ under the correspondence establisehd in the proof of Theorem~\ref{t:jmaxlen}.  Similarly, $f(c) \in S$ for each $c \in S_{\MinS}$ by the proof of Theorem~\ref{t:jminlen}.  In particular, \(f(b)\)~has a factorization of length \(\M_{S_{\M}}(b)\) and \(g(c)\) has a factorization  of length \(\M_{S_{\MinS}}(c)\).

\begin{thm}\label{t:bound}
Theorem~\ref{t:structurethm} holds for all \(n \ge n_k^2 - n_1^2\).  
% \[\LengthSet(n) = \{\MinS(n), \MinS(n) + d, \dots, \M(n) - d, \M(n)\} \setminus \big( (-dA_i + \M(n)) \cup (dA'_{i'} + \MinS(n)) \big)\]
% where \(i = n \bmod n_1\) and \(i' = n \bmod n_k\), with \(d\), \(t\), \(t'\), \(A_i\) and \(A'_{i'}\) as defined in Theorem~\ref{t:structurethm}. 
\end{thm}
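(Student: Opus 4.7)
The plan is to verify the three ingredients used in the proof of Theorem~\ref{t:structurethm}:\ (i) Theorem~\ref{t:jmaxlen} for $j \le t$, (ii) Theorem~\ref{t:jminlen} for $j \le t'$, and (iii) the middle is filled, i.e., $(\mathsf{m}(n) + d\ZZ) \cap [\mathsf{m}(n) + t'd, \mathsf{M}(n) - td] \subseteq \mathsf{L}(n)$.  From the argument in the proof of Theorem~\ref{t:jmaxlen}, every $\ell \in \mathsf{L}(n)$ admits two equivalent characterizations:\ the \emph{max side} ($b := n - \ell n_1 \in S_\M$ together with $\ell \ge \mathsf{m}_{S_\M}(b)$) and the \emph{min side} ($c := \ell n_k - n \in S_\MinS$ together with $\ell \ge \mathsf{m}_{S_\MinS}(c)$).

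First, I apply Lemma~\ref{l:minlenupperbound} to $S_\M$ (smallest generator $n_2 - n_1$, largest $n_k - n_1$) and to $S_\MinS$ (smallest $n_k - n_{k-1}$, largest $n_k - n_1$).  After substituting $b = n - \ell n_1$ and $c = \ell n_k - n$ and simplifying, the respective inequalities $\ell \ge \mathsf{m}_{S_\M}(b)$ and $\ell \ge \mathsf{m}_{S_\MinS}(c)$ become automatic for $\ell \ge L_1$ and $\ell \le L_2$, respectively, where
\[
L_1 = \frac{n}{n_k} + \frac{(n_k - n_1)(n_k - n_2)}{n_k}
\quad\text{and}\quad
L_2 = \frac{n}{n_1} - \frac{(n_k - n_1)(n_{k-1} - n_1)}{n_1}.
\]

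Second, I verify that $n \ge n_k^2 - n_1^2$ implies $L_1 \le L_2$.  Clearing denominators rewrites $L_1 \le L_2$ as $n \ge n_k n_{k-1} - n_1 n_2$, which is implied by the hypothesis since $n_k(n_k - n_{k-1}) + n_1(n_2 - n_1) \ge 0$ gives $n_k n_{k-1} - n_1 n_2 \le n_k^2 - n_1^2$.  Hence every integer $\ell$ lies in at least one ``active'' range.  This settles (iii):\ for $\ell$ in the middle, $r := (\mathsf{M}(n) - \ell)/d > t \ge \max A_i$ forces $b \in S_\M$ (and symmetrically $c \in S_\MinS$), so whichever side is active gives $\ell \in \mathsf{L}(n)$.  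It also settles (i) and (ii):\ for the top-$t$ lengths the max side is active, so $\ell \in \mathsf{L}(n) \iff b \in S_\M \iff r \notin A_i$, and symmetrically for the bottom.

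The main obstacle is verifying the boundary, namely that $\mathsf{M}(n) = (n - b_{i1})/n_1$ and $\mathsf{m}(n) = (n + c_{i'1})/n_k$, so that the offsets $-dA_i + \mathsf{M}(n)$ and $dA_{i'}' + \mathsf{m}(n)$ correctly line up with the missing lengths produced by the max and min bijections.  This reduces to checking $(n - b_{i1})/n_1 \ge L_1$ and its mirror, which uses the bound $b_{i1} \le \Frob(S_\M) + dn_1$ coming from the generalized Apéry structure of Section~\ref{sec:generalapery}, together with the Sylvester-type estimate $\Frob(S_\M) \le (n_2 - n_1)(n_k - n_1)/d$; after substitution the resulting inequality is dominated by $n \ge n_k^2 - n_1^2$.
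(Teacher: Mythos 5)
Your proposal takes essentially the same route as the paper's proof: both apply Lemma~\ref{l:minlenupperbound} to $S_\M$ and $S_\MinS$ to show that the ``max-side'' and ``min-side'' characterizations of membership in $\LengthSet(n)$ are each decided by a pure semigroup-membership condition ($b \in S_\M$, resp.\ $c \in S_\MinS$) once $\ell$ lies past a linear threshold, and then both verify that the two active ranges together cover $[\tfrac{1}{n_k}n, \tfrac{1}{n_1}n]$ when $n \ge n_k^2 - n_1^2$. The only organizational difference is that the paper splits on the explicit midpoint $\ell \gtrless \tfrac{2n}{n_k + n_1}$ and checks separately that $\ell$ then clears the relevant Lemma threshold, whereas you solve for the thresholds $L_1, L_2$ outright and verify $L_1 \le L_2$. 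Two small observations: your $L_2$ expression tracks Lemma~\ref{l:minlenupperbound} more literally than the paper's displayed chain --- the additive term for $S_\MinS$ should be the difference of its largest and smallest generators, namely $n_{k-1} - n_1$, which is what you use, while the paper writes $n_k - n_{k-1}$; the intermediate inequality $(n_k - n_1)^2 \ge (n_k - n_1)(n_{k-1} - n_1)$ still holds so the paper's argument is unaffected, but your version is the one that actually invokes the lemma correctly. Your bound $n \ge n_k n_{k-1} - n_1 n_2$ from clearing denominators is also slightly sharper than the stated $n_k^2 - n_1^2$, which you correctly observe dominates it. Finally, your last paragraph explicitly flags the need to verify $\M(n) = (n - b_{i1})/n_1$ and $\MinS(n) = (n + c_{i'1})/n_k$ so that the offsets line up with $A_i$ and $A'_{i'}$; the paper leaves this implicit, relying on Theorem~\ref{t:structurethm} holding for some unspecified bound, but your sketch via $b_{i1} \le \Frob(S_\M) + d n_1$ and the Sylvester-type estimate is the right way to make that step explicit and is compatible with the claimed bound.
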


\begin{proof}
Suppose $n \ge n_k^2 - n_1^2$.  
Fix $\ell \in [\tfrac{1}{n_k}n, \tfrac{1}{n_1}n] \cap \ZZ$.  First, suppose 
\[
\ell \ge \frac{n}{(n_k + n_1)/2} = \frac{2n}{n_k + n_1},
\]
and let $b = n - \ell n_1$.  If $b \notin S_{\M}$, then $\ell + q \notin \mathsf L(n + q n_1)$ for all sufficiently large $q$ by Theorem~\ref{t:structurethm}, so $\ell \notin \mathsf L(n)$.  If $b \in S_{\M}$, then applying Lemma~\ref{l:minlenupperbound} to $S_{\M}$, there exists a factorization of $b \in S_{\M}$ of length at most $\ell$ since 
% \begin{align*}
% \ell
% &\ge \frac{\ell n_k + ()}{n_k - n_1}
% = \frac{\ell n_k - (n_k^2 - n_1^2)}{n_k - n_1} + (n_k - n_2)
% \ge \frac{\ell n_k - n}{n_k - n_1} + (n_k - n_2)
% \\
% &= \frac{n - (2n - \ell n_k)}{n_k - n_1} + (n_k - n_2)
% % \ge \frac{n - \ell n_1}{n_k - n_1} + (n_k - n_2)
% \ge \frac{n - \ell n_1}{n_k - n_1} + (n_k - n_2)
% = \frac{b}{n_k - n_1} + (n_k - n_2).
% \end{align*}
% \begin{align*}
% \ell
% &\ge \ell + \frac{\ell n_1}{n_k - n_1} - n_1 - n_2
% = \ell + \frac{\ell n_1 - (n_k^2 - n_1^2)}{n_k - n_1} + (n_k - n_2)
% \\
% &\ge \ell + \frac{\ell n_1 - n}{n_k - n_1} + (n_k - n_2)
% = \ell + \frac{n - (2n - \ell n_1)}{n_k - n_1} + (n_k - n_2)
% \\
% &\ge \ell + \frac{n - \ell n_k}{n_k - n_1} + (n_k - n_2)
% = \frac{n - \ell n_1}{n_k - n_1} + (n_k - n_2)
% = \frac{b}{n_k - n_1} + (n_k - n_2).
% \end{align*}
\begin{align*}
\ell (n_k - n_1)
&\ge \frac{2n n_k}{n_k + n_1} - \ell n_1
% = \frac{n(n_k + n_1 + n_k - n_1)}{n_k + n_1} - \ell n_1
= n - \ell n_1 + \frac{n(n_k - n_1)}{n_k + n_1}
\\
&\ge n - \ell n_1 + (n_k - n_1)^2
\ge n - \ell n_1 + (n_k - n_1)(n_k - n_2)
\end{align*}
implies
\[
\ell
\ge \frac{n - \ell n_1}{n_k - n_1} + (n_k - n_2)
= \frac{b}{n_k - n_1} + \big( (n_k - n_1) - (n_2 - n_1) \big).
\]
As such, we apply the correspondence in the proof of Theorem~\ref{t:jmaxlen}:\ if 
\[
b = q_2(n_2 - n_1) + \cdots + q_k(n_k - n_1)
\]
is a factorization of $b \in S_{\M}$ of length at most $\ell$, then
\[
n = b + \ell n_1 = (\ell - q_2 - \cdots - q_k)n_1 + q_2n_2 + \cdots + q_kn_k
\]
is a factorization of $n \in S$ of length exactly $\ell$, so $\ell \in \mathsf L(n)$.  

Next, suppose
\[
\ell \le \frac{2n}{n_k + n_1},
\]
and let $c = \ell n_k - n$.  If $c \notin S_{\MinS}$, then $\ell + q \notin \mathsf L(n + qn_k)$ for all sufficiently large $q$ by Theorem~\ref{t:structurethm}, so $\ell \notin \mathsf L(n)$.  If $c \in S_{\MinS}$, then applying Lemma~\ref{l:minlenupperbound} to $S_{\MinS}$, there exists a factorization of $c \in S_{\MinS}$ of length at most $\ell$ since
\begin{align*}
\ell (n_k - n_1)
&\ge \ell n_k - \frac{2nn_1}{n_1 + n_k}
= \ell n_k - n + \frac{n(n_k - n_1)}{n_k + n_1}
\\
&\ge \ell n_k - n + (n_k - n_1)^2
\ge \ell n_k - n + (n_k - n_1)(n_k - n_{k-1})
\end{align*}
implies
\[
\ell
\ge \frac{\ell n_k - n}{n_k - n_1} + (n_k - n_{k-1})
= \frac{c}{n_k - n_1} + (n_k - n_{k-1})
\]
Thus, as before, $\ell \in \mathsf L(n)$ by the correspondence in the proof of Theorem~\ref{t:jminlen}.  
\end{proof}

\begin{remark}\label{r:topbottomoverlap}
If $n \ge n_k^2 - n_1^2$, the ``top'' and ``bottom'' of the length set (as described in Proposition~\ref{p:tbounds}) do not overlap.  Indeed, by Theorem~\ref{t:structurethm}, if $\ell \in [\tfrac{1}{n_k}n, \tfrac{1}{n_1}n] \cap \ZZ$ with $\ell \notin \mathsf L(n)$, then
\[
\ell \in
[\mathsf m(n), \mathsf m(n) + \tfrac{1}{n_k}\Frob(S_{\MinS})] 
\cup 
[\mathsf M(n) - \tfrac{1}{n_1}\Frob(S_{\M}), \mathsf M(n)],
\]
from which we obtain
\begin{align*}
\frac{n}{n_1} - \frac{n}{n_k}
&= n\frac{n_k - n_1}{n_1n_k}
\ge (n_2n_k - n_1n_{k-1})\frac{n_k - n_1}{n_1n_k}
= \big( n_k(n_2 - n_1) + n_1(n_k - n_{k-1}) \big)\frac{n_k - n_1}{n_1n_k}
\\
&= \tfrac{1}{n_1}(n_k - n_1)(n_2 - n_1) + \tfrac{1}{n_k}(n_k - n_1)(n_k - n_{k-1})
\ge \tfrac{1}{n_1}\Frob(S_{\M}) + \tfrac{1}{n_k}\Frob(S_{\MinS}),
\end{align*}
where the final inequality follows from \cite[Theorem~3.1.1]{diophantinefrob}. 
\end{remark}

\begin{remark}\label{r:deltaset}
Given $n \in S$ and writing $\mathsf L(n) = \{\ell_1 < \cdots < \ell_r\}$, the \emph{delta set} of $n$ is 
\[
\Delta(n) = \{\ell_i - \ell_{i-1} : i \le r\}.
\]
It is known that $\Delta(n) = \Delta(n + \lcm(n_1,n_k))$ for all $n \ge 2kn_2n_k^2$~\cite{deltaperiodic}, and some effort has been made to refine this bound~\cite{compasympdelta} and to compute delta sets explicitly~\cite{dynamicalg,affineinvariantcomp}.  
Theorem~\ref{t:bound}, in addition to providing an explicit bound for Corollary~\ref{c:jmaxminquasi}, identifies a bound on the start of periodicity for the delta set.  Our bound appears to be better on average than the one obtained in~\cite{compasympdelta} (in a sample of 10000 randomly selected numerical semigroups with $k \le 10$ and $n_k \le 10000$, our bound was better in roughly 75\% of cases), as well as more concise (the one in~\cite{compasympdelta} takes the better part of a page to write down).  
\end{remark}

\begin{remark}\label{r:boundspecialcases}
If \(n_2 - n_1 = d\), then \(S_{\M}\) has no gaps, and so \(A_i= \varnothing\) for all \(i\).  Analogously, if \(n_k - n_{k-1} = d\), then \(S_{\MinS}\) has no gaps and thus \(A'_{i'}= \varnothing\) for all \(i'\).  In particular, if both of these are satisfied, then, for \(n\) sufficiently large, every length set is an arithmetic sequence with step size \(d\).  Note that the ``sufficiently large $n$'' is necessary even in this special case.  For example, if \(n = 26 \in S = \langle 5, 6, 13, 14 \rangle\), then \(\LengthSet(n) = \{2, 5\}\).
% since 
% \[
% 26 = 2 \cdot 13 = 4 \cdot 5 + 1 \cdot 6.
% \]
This is an improvement on \cite[Corollary~3.6]{gensomitted}, which relates the length sets of element of a numerical semigroup generated by an arithmetic sequence to one in which ``middle generators'' are omitted.  
\end{remark}

%%%%%%%%%%%%%%%%%%%%%%%%%%%%%%%%%%%%%%%%%%%%%%%%%%%%%%%%%%%%%%%%%%%%%%%%%
\section*{Acknowledgements}%%%%%%%%%%%%%%%%%%%%%%%%%%%%%%%%%%%%%%%%%%%%%%
%raggedbottom%%%%%%%%%%%%%%%%%%%%%%%%%%%%%%%%%%%%%%%%%%%%%%%%%%%%%%%%%%%%

The authors would like to thank Scott Chapman, Alfred Geroldinger, and Vadim Ponomarenko for their feedback and helpful conversations.

%%%%%%%%%%%%%%%%%%%%%%%%%%%%%%%%%%%%%%%%%%%%%%%%%%%%%%%%%%%%%%%%%%%%%%%%%
%%%%%%%%%%%%%%%%%%%%%%%%%%%%%%%%%%%%%%%%%%%%%%%%%%%%
%%%%%%%%%%%%%%%%%%%%%%%%%%%%%%%%%%%%%%%%%%%%%%%%%%%%%%%%%%%%%%%%%%%%%%%%%

%%%%%%%%%%%%%%%%%%%%%%%%%%%%%%%%%%%%%%%%%%%%%%%%%%%%%%%%%%%%%%%%%%%%%%%%%

\begin{thebibliography}{HHHKR10}%%%%%%%%%%%%%%%%%%%%%%%%%%%%%%%%%%%%%%%%%
\raggedbottom%%%%%%%%%%%%%%%%%%%%%%%%%%%%%%%%%%%%%%%%%%%%%%%%%%%%%%%%%%%%

\bibitem{gapsnonsym}
F.~Aicardi and L.~Fel, 
\emph{Gaps in nonsymmetric numerical semigroups},
Israel J.~Math.\ \textbf{175} (2010), 85--112.

\bibitem{setoflengthsets}
J.~Amos, S.~Chapman, N.~Hine, J.~Paix\~ao, 
\emph{Sets of lengths do not characterize numerical monoids}, 
Integers \textbf{7} (2007), \#A50.

\bibitem{numericalappl}
A.~Assi and P.~Garc\'ia-S\'anchez,
\emph{Numerical semigroups and applications},
RSME Springer Series, 1.~Springer, [Cham], 2016.

\bibitem{dynamicalg}
T.~Barron, C.~O'Neill, and R.~Pelayo, 
\emph{On dynamic algorithms for factorization invariants in numerical monoids}, 
Mathematics of Computation \textbf{86} (2017), 2429--2447.  
% Available at \textsf{arXiv:math.AC/1507.07435}

\bibitem{elastsets}
T.~Barron, C.~O'Neill, and R.~Pelayo,
\emph{On the set of elasticities in numerical monoids},
Semigroup Forum \textbf{94} (2017), no.~1, 37--50.  
% Available at \textsf{arXiv:math.CO/1409.3425}.

\bibitem{delta}
C.~Bowles, S.~Chapman, N.~Kaplan, D.~Reiser, 
\emph{On delta sets of numerical monoids}, 
J.~Algebra Appl.\ \textbf{5} (2006) 1--24.

\bibitem{catenaryperiodic}
S.~Chapman, M.~Corrales, A.~Miller, C.~Miller, and D.~Patel,
\emph{The catenary and tame degrees on a numerical monoid are eventually periodic},
J.\ Aust.\ Math.\ Soc.\ 97 (2014), no.\ 3, 289--300.

\bibitem{beyondcoins}
S.~Chapman, R.~Garcia, and C.~O'Neill,
\emph{Beyond coins, stamps, and Chicken McNuggets:\ an invitation to numerical semigroups},
A Project-Based Guide to Undergraduate Research in Mathematics (ed.~P.~Harris, E.~Insko, A.~Wootton)
Foundations of Undergraduate Research in Mathematics Series, Birkh\"auser, Cham.  

\bibitem{subdeltas}
S.~Chapman, F.~Gotti, and R.~Pelayo, 
\emph{On delta sets and their realizable subsets in Krull monoids with cyclic class groups}, 
Colloq.\ Math.\ 137 (2014), no.\ 1, 137--146. 

\bibitem{deltaperiodic}
S.~Chapman, R.~Hoyer, and N.~Kaplan, 
\emph{Delta sets of numerical monoids are eventually periodic}, 
Aequationes mathematicae 77 \textbf{3} (2009) 273--279.

% \bibitem{factordedekind}
% S.~Chapman and W.~Smith,
% \emph{Factorization in Dedekind domains with finite class group},
% Israel J. Math. 71 (1990), no. 1, 65--95.

\bibitem{deltarealizationnumerical}
S.~Colton and N.~Kaplan, 
\emph{The realization problem for delta sets of numerical semigroups}, 
J.~Commut.\ Algebra \textbf{9} (2017), no.~3, 313--339.

\bibitem{valuesetplanebranches}
M.~de Abreu, M.~Hernandes, and E.~Marcelo,
\emph{On the value set of 1-forms for plane branches},
Semigroup Forum \textbf{105} (2022), no.~2, 385--397.

\bibitem{heirarchycodesns}
M.~Delgado, J.~Farr\'an, P.~Garc\'ia-S\'anchez, and D.~Llena, 
\emph{On the weight hierarchy of codes coming from semigroups with two generators},
IEEE Trans.\ Inform.\ Theory \textbf{60} (2014), no.~1, 282--295.

\bibitem{factorizationadditivebridge}
Y.~Fan and S.~Tringali, 
\emph{Power monoids:\ a bridge between factorization theory and arithmetic combinatorics},
J.~Algebra \textbf{512} (2018), 252--294.

\bibitem{telescopiccodes}
J.~Farr\'an, P.~Garc\'ia-S\'anchez, B.~Heredia, and M.~Leamer, 
\emph{The second Feng-Rao number for codes coming from telescopic semigroups},
Des.~Codes Cryptogr.\ \textbf{86} (2018), no.~8, 1849--1864.

\bibitem{goppalikefengrao}
J.~Farr\'an and C.~Munuera, 
\emph{Goppa-like bounds for the generalized Feng-Rao distances},
International Workshop on Coding and Cryptography (WCC 2001) (Paris),
Discrete Appl.\ Math \textbf{128} (2003), no.~1, 145--156.

\bibitem{compasympomega}
J.~Garc\'ia-Garc\'ia, M.~Moreno-Fr\'ias, and A.~Vigneron-Tenorio, 
\emph{Computation of the $\omega$-primality and asymptotic $\omega$-primality with applications to numerical semigroups}, 
Israel J.\ Math.\ 206 (2015), no.~1, 395--411.
% Available at \textsf{arXiv:math.AC/1307.5807}.

\bibitem{compasympdelta}
J.~Garc\'ia-Garc\'ia, M.~Moreno-Fr\'ias, and A.~Vigneron-Tenorio, 
\emph{Computation of delta sets of numerical monoids}, 
Monatshefte f\"ur Mathematik \textbf{178} (2015), no.~3 457--472.  
% Available at \textsf{arXiv:math.AC/1406.0280}

\bibitem{hwintersection}
P.~Garc\'ia-S\'anchez and M.~Leamer, 
\emph{Huneke-Wiegand Conjecture for complete intersection numerical semigroup}, 
J.\ Algebra, 391 (2013), 114--124.

\bibitem{affineinvariantcomp}
P.~Garc\'ia-S\'anchez, C.~O'Neill, and G.~Webb, 
\emph{On the computation of factorization invariants for affine semigroups}, 
Journal of Algebra and its Applications \textbf{18} (2019), no.~1, 1950019, 21~pp.

\bibitem{geroldingerlengthsets} 
A.~Geroldinger, 
\emph{A structure theorem for sets of lengths}, 
Colloq. Math., 78 (1998), pp. 225--259.

\bibitem{setsoflengthmonthly}
A.~Geroldinger, 
\emph{Sets of lengths},
Amer.\ Math.\ Monthly \textbf{123} (2016), no.~10, 960--988.

\bibitem{nonuniq}
A.~Geroldinger and F.~Halter-Koch, 
\emph{Nonunique factorization: Algebraic, Combinatorial, and Analytic Theory}, 
Chapman \& Hall/CRC, Boca Raton, FL, 2006.

\bibitem{deltarsetealization}
A.~Geroldinger and W.~Schmid, 
\emph{A realization theorem for sets of distances},
Journal of Algebra (2017) 481, 188--198.  

\bibitem{lensetrealiz}
A.~Geroldinger and W.~Schmid,
\emph{A realization theorem for sets of lengths in numerical monoids},
Forum Mathematicum. \textbf{30} (2018), no.~5, 1111--1118.

\bibitem{setdistanceskrull}
A.~Geroldinger and P.~Yuan, 
\emph{The set of distances in Krull monoids},
Bull.\ Lond.\ Math.\ Soc.\ \textbf{44} (2012), no.~6, 1203--1208.

\bibitem{realizsystemlengthskrull}
A.~Geroldinger and Q~Zhong, 
\emph{A realization result for systems of sets of lengths},
Israel J.~Math.\ \textbf{247} (2022), no.~1, 177--193.

\bibitem{gensomitted}
S.~Hyup Lee, C.~O'Neill, and B.~Van Over,
\emph{On arithmetical numerical monoids with some generators omitted},
Semigroup Forum \textbf{98} (2019), no.~2, 315--326.

\bibitem{compapery}
G.~M\'arquez-Campos, I.~Ojeda, and J.~Tornero, 
\emph{On the computation of the Apéry set of numerical monoids and affine semigroups}
Semigroup Forum 91 (2015), no.~1, 139--158.

\bibitem{factorhilbert}
C.~O'Neill, 
\emph{On factorization invariants and Hilbert functions},
Journal of Pure and Applied Algebra \textbf{221} (2017), no.~12, 3069--3088.  

\bibitem{omegaquasi}
C.~O'Neill and R.~Pelayo, 
\emph{On the Linearity of $\omega$-primality in Numerical Monoids},
J.~Pure and Applied Algebra \textbf{218} (2014) 1620--1627.  

\bibitem{numericalsurvey}
C.~O'Neill and R.~Pelayo, 
\emph{Factorization invariants in numerical monoids},
Contemporary Mathematics \textbf{685} (2017), 231--249.  

\bibitem{diophantinefrob}
J.~Ram\'irez Alfons\'in,
\emph{The Diophantine Frobenius problem}, 
Oxford Lecture Series in Mathematics and its Applications, 30. 
Oxford University Press, Oxford, 2005. xvi+243 pp. ISBN: 978-0-19-856820-9; 0-19-856820-7 

\bibitem{aperyhilbert}
J.L.~Ram\'irez Alfons\'in and {\O}.~R{\o}dseth, 
\emph{Numerical semigroups: Ap\'ery sets and Hilbert series},
Semigroup Forum 79 (2009), no. 2, 323--340. 

\bibitem{fundamentalgaps}
J.~Rosales, P.~Garc\'ia-S\'anchez, J.~Garc\'ia-Garc\'ia, J.~Jim\'enez Madrid,
\emph{Fundamental gaps in numerical semigroups},
J.~Pure Appl.\ Algebra 189 (2004), no.~1--3, 301--313. 

\bibitem{realizthm}
W.~Schmid, 
\emph{A realization theorem for sets of lengths}, 
J. Number Theory. 129 no. 5 (2009) 990--999.

\bibitem{lensetprogress}
W. Schmid, 
\emph{Characterization of class groups of Krull monoids via their systems of sets of lengths:\ a status report}, 
Number Theory and Applications: Proceedings of the International Conferences on Number Theory and Cryptography, Hindustan Book Agency, 2009, pp.~189--212.

\bibitem{frobsylvester}
J.~Sylvester, 
\emph{Question 7382}, 
Mathematical Questions from the Educational Times \textbf{41} (1884) 21.

\bibitem{additivefamiliesfactorization}
S.~Tringali, 
\emph{Structural properties of subadditive families with applications to factorization theory},
Israel J.~Math.\ \textbf{234} (2019), no.~1, 1--35.

%%%%%%%%%%%%%%%%%%%%%%%%%%%%%%%%%%%%%%%%%%%%%%%%%%%%%%%%%%%%%%%%%%%%%%%%%
\end{thebibliography}
\end{document}